\documentclass[11pt,twoside]{article}

\usepackage{amsmath,amssymb,amsthm,mathtools}
\usepackage[a4paper,margin=1in]{geometry}
\usepackage{microtype}
\allowdisplaybreaks[2]
\usepackage[hidelinks]{hyperref}
\usepackage{enumitem}
\usepackage{microtype}
\usepackage[nameinlink,capitalise]{cleveref}
\usepackage{xcolor}
\usepackage{booktabs}
\usepackage{array}
\usepackage{mathrsfs}

\numberwithin{equation}{section}

\newtheorem{theorem}{Theorem}[section]
\newtheorem{proposition}[theorem]{Proposition}
\newtheorem{lemma}[theorem]{Lemma}
\theoremstyle{remark}
\newtheorem{remark}[theorem]{Remark}

\newcommand{\R}{\mathbb R}
\newcommand{\cC}{\mathcal C}
\newcommand{\cN}{\mathcal N}
\newcommand{\cQ}{\mathcal Q}
\newcommand{\DGN}{\Delta_{\mathrm{GN}}}
\newcommand{\eps}{\varepsilon}
\newcommand{\OO}{\mathcal O}
\newcommand{\supp}{\operatorname{supp}}

\newcommand{\dd}{\,\mathrm d}

\numberwithin{equation}{section}

\begin{document}

\title{Density Collapse and Gradient Catastrophe in a One-Dimensional Euler--Poisson--Cattaneo System}
\author{{\sc Qingsong Zhao}\thanks{School of Mathematics and Science, Nanyang Institute of Technology, Nanyang 473004, China. Email: qqsszhao@nyist.edu.cn}}
\date{}
\maketitle

\begin{abstract}
We study finite-time singularity formation for a one-dimensional Euler--Poisson system in Lagrangian coordinates with Cattaneo heat conduction and quadratic heat-flux corrections in the pressure and internal energy. Under the structural choice $\kappa=\kappa_0v$, two different breakdown mechanisms are obtained. First, an explicit affine solution reaches $v=0$ in finite time, so the Eulerian density $\rho=1/v$ diverges while the remaining state variables stay finite at each fixed spatial point. Second, for $a'(1)<0$, the Poisson field is localized near equilibrium by $F=\phi_x/v$, producing a strictly hyperbolic balance law subject to the propagated constraint $F_x=1-v$. We compute the characteristic speeds and an explicit genuine-nonlinearity coefficient and show that genuine nonlinearity fails for all propagating families only when two explicit algebraic degeneracy conditions hold simultaneously. For neutral compactly supported data satisfying the Poisson constraint exactly, a narrow compressive wave generates only an $O(\varepsilon^2\eta)$ zero-speed Poisson mode. A constraint-compatible John--H\"ormander--B\"arlin bootstrap and a Riccati comparison then yield finite-time gradient catastrophe while the solution remains uniformly close to equilibrium and $F_x$ stays bounded. The two results exhibit density collapse and small-amplitude gradient blow-up as distinct breakdown mechanisms within the same Euler--Poisson--Cattaneo model.

\medskip
\noindent\textbf{Keywords:} Euler--Poisson system ; Cattaneo heat conduction ; gradient catastrophe ; finite-time blow-up; hyperbolic balance law ; Poisson constraint.
\\[2mm]
\noindent{\sc AMS Subject Classification:} 35L67 , 35L60 , 35B44 , 76N10 , 35Q31.
\end{abstract}

\section{Introduction}\label{sec:intro}

Finite-time singularity formation in compressible flow reflects a competition between nonlinear wave steepening and the mechanisms that transport or dissipate energy.  This competition changes qualitatively when Fourier heat conduction is replaced by Cattaneo's law: the heat flux becomes a dynamical variable with a finite relaxation time, and in the hyperbolic formulations considered below the thermal component enters the characteristic dynamics instead of furnishing Fourier-type parabolic smoothing.  Coupling the fluid to a self-consistent Poisson field introduces a second feature: the force is tied to the density through a differential constraint, so admissible perturbations no longer fill the whole phase space.  It is therefore natural to ask whether nonlinear compression can still produce finite-time singularities under this constraint, and whether the resulting breakdown occurs through the state itself or only through its derivatives; see \cite{Cattaneo1948,KawashimaUeda2016,HuRacke2016,HuRackeWang2022,Racke2025,Zhao2026HNS} for related Cattaneo-type fluid models.

Motivated by these questions, we study the one-dimensional system in Lagrangian coordinates
\begin{subequations}\label{CEP}
\begin{align}
v_t-u_x&=0, \label{CEP-a}\\
u_t+p_x&=\frac{\phi_x}{v}, \label{CEP-b}\\
e_t+pu_x+q_x&=0,\label{CEP-c}\\
\tau q_t+q+\frac{\kappa\theta_x}{v}&=0,\label{CEP-d}\\
\left(\frac{\phi_x}{v}\right)_x&=1-v.\label{CEP-e}
\end{align}
\end{subequations}
The constitutive relations are
\begin{equation}\label{pressure}
p=a(v)+\alpha(1-\theta)-\frac{\tau q^2}{2\kappa\theta}.
\end{equation}
\begin{equation}\label{energy}
e=C_v\theta+P(v)-\alpha(v-1)+\frac{\tau vq^2}{\kappa\theta}.
\end{equation}
\begin{equation}\label{Pdef}
P(v)=-\int_1^v a(\xi)\,\dd\xi.
\end{equation}
and throughout the paper
\begin{equation}\label{kappa}
\kappa=\kappa_0v,\qquad \kappa_0>0,
\end{equation}
with
\begin{equation}\label{equil-assump}
a(1)=0,\qquad C_v>0,\quad \alpha>0,\quad \tau>0.
\end{equation}
The equilibrium is
\[
(v,u,\theta,q)=(1,0,1,0).
\]
The Eulerian density is $\rho=1/v$.

The constitutive structure in \eqref{pressure}--\eqref{kappa} is chosen so that the heat flux remains a dynamical variable rather than being eliminated through Fourier's law.  In particular, the quadratic $q$-corrections in $p$ and $e$ are of the same extended-thermodynamic form as the heat-flux corrections derived for a related hyperbolic Navier--Stokes closure in \cite{Zhao2026HNS}; entropy and dissipative structures for the Euler--Cattaneo--Maxwell system were studied in \cite{KawashimaUeda2016}.  The special conductivity \eqref{kappa} is also analytically decisive: it reduces the Cattaneo equation to
\[
\tau q_t+q+\kappa_0\theta_x=0,
\]
and removes the explicit $v$-dependence from the quadratic heat-flux contribution to the internal energy.  Thus, near equilibrium, the thermal relaxation participates in the hyperbolic characteristic structure instead of supplying a parabolic smoothing term.

The present paper focuses on two complementary questions.  First, can the solution itself reach the boundary $v=0$ of the physical state space in finite time, so that the Eulerian density becomes infinite?  Second, if the solution starts with arbitrarily small amplitude near equilibrium and remains in the interior of the physical state space, can nonlinear compression nevertheless force first derivatives to blow up in finite time?  The first question concerns a large-scale state-space collapse; the second concerns a genuinely hyperbolic gradient catastrophe.  Showing that both occur in the same system is the main theme of the paper.

For the gradient-catastrophe result it is convenient to introduce the electric-field variable
\begin{equation}\label{Fdef-intro}
F:=\frac{\phi_x}{v}.
\end{equation}
The Poisson equation then imposes the differential constraint $F_x=1-v$; for neutral compact perturbations this variable will satisfy $F_t=-u$.  This localization, together with the constraint it carries, is the basic structural feature used below.

\subsection{Main results}\label{sec:model-main}

The first result exhibits a state-space collapse that is completely explicit.

\begin{theorem}[Explicit density collapse]\label{thm:affine}
Let $\mu>1$ and assume that $a\in C^1((0,1])$. Then
\begin{equation}\label{affine-sol}
\begin{aligned}
v(t,x)&=1-\mu\sin t,\\
u(t,x)&=-\mu\cos t\,x,\\
\theta(t,x)&=\exp\!\left(-\frac{\alpha \mu}{C_v}\sin t\right),\\
q(t,x)&=0,
\end{aligned}
\end{equation}
together with
\begin{equation}\label{affine-Fphi}
F(t,x)=\frac{\phi_x}{v}=\mu\sin t\,x,
\end{equation}
for example with
\[
\phi(t,x)=\frac12\bigl(1-\mu\sin t\bigr)\mu\sin t\,x^2
\]
(up to an additive function of $t$), solves \eqref{CEP} classically for
\[
0\le t<T_c:=\arcsin(\mu^{-1}).
\]
Moreover,
\[
v(t,x)\downarrow0,
\qquad
\rho(t,x)=v(t,x)^{-1}\uparrow+\infty
\quad\text{as }t\uparrow T_c,
\]
whereas, for each fixed $x$, the variables $u,\theta,q,F$ remain finite up to $T_c$.
\end{theorem}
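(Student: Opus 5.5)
The plan is direct substitution. Every field in \eqref{affine-sol}--\eqref{affine-Fphi} is either spatially constant or affine in $x$, so each equation of \eqref{CEP} reduces to an ODE identity in $t$ that I can check by hand. First I fix the admissible time range. Since $\mu>1$, $T_c=\arcsin(\mu^{-1})\in(0,\pi/2)$. On $[0,T_c)$ we have $0\le\mu\sin t<1$, so $v=1-\mu\sin t\in(0,1]$. Hence $a(v)$ and $P(v)$ are defined, with $P\in C^2((0,1])$ because $a\in C^1$. Also $\kappa=\kappa_0v>0$ and $\theta>0$. Because $q\equiv0$, the quadratic heat-flux terms in \eqref{pressure}--\eqref{energy} vanish identically, so there is no division issue.

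Next I verify the equations one at a time. For \eqref{CEP-a}, $v_t=-\mu\cos t=u_x$. For \eqref{CEP-d}, $q=0$ and $\theta_x=0$, so the equation holds trivially. For \eqref{CEP-b}, $p=a(v)+\alpha(1-\theta)$ depends only on $t$, so $p_x=0$. Then $u_t=\mu\sin t\,x$, which equals $F=\phi_x/v$. Here I check that $\phi_x=(1-\mu\sin t)\mu\sin t\,x=vF$ for the stated $\phi$. For \eqref{CEP-e}, $F_x=\mu\sin t=1-v$.

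The only step that needs care is the energy equation \eqref{CEP-c}. Using $e=C_v\theta+P(v)-\alpha(v-1)$, $P'=-a$ and $u_x=v_t$, I get
\[
e_t+pu_x=C_v\theta_t-(a(v)+\alpha)v_t+(a(v)+\alpha-\alpha\theta)v_t=C_v\theta_t-\alpha\theta v_t .
\]
This vanishes exactly when $(\log\theta)_t=(\alpha/C_v)v_t$, that is, when $\theta=\exp(\alpha(v-1)/C_v)=\exp(-\alpha\mu\sin t/C_v)$, which is the stated formula. The cancellation of $a(v)$ between $P'(v)v_t$ and $p\,u_x$ is the structural point. It is why the solution works for arbitrary $a$ and needs no sign condition on $a'$.

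Finally, for the blow-up description I note that $\cos t>0$ on $[0,T_c]$ because $T_c<\pi/2$. Hence $v$ is strictly decreasing with $v(t)\downarrow 1-\mu\sin T_c=0$, and $\rho=1/v\uparrow+\infty$. Meanwhile, at fixed $x$ we have $|u|\le\mu|x|$, $|F|\le|x|$, $\theta\in[e^{-\alpha/C_v},1]$, and $q=0$, all bounded up to $T_c$. Classical regularity on $[0,T_c)\times\R$ follows since all fields are smooth in $(t,x)$ there and $a\circ v\in C^1$. I expect no real obstacle beyond the energy bookkeeping above.
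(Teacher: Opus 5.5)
Your proposal is correct and follows the paper's argument. The paper also verifies the affine ansatz by direct substitution, and the only nontrivial step is the cancellation of $a(v)$ between $P'(v)v_t$ and $p\,u_x$ in the energy equation, which forces $\theta=\exp(\alpha(v-1)/C_v)$; the paper first does this for the general affine family $b''+b=1$ and then specializes to $b=1-\mu\sin t$.
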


The solution in Theorem~\ref{thm:affine} is an affine whole-line solution, not a finite-energy perturbation of equilibrium: in particular, $u(t,x)$ grows linearly in $x$.  Its role is to exhibit explicitly a state-space collapse mechanism.  A conditional compact localization, under an additional finite-propagation hypothesis along the affine trajectory, is discussed in Remark~\ref{rem:affine-localize}.

The second result concerns a different mechanism.  For the localized system associated with \eqref{Fdef-intro}, let $\lambda_p$ and $r_p$ denote a nonzero characteristic speed and a corresponding right eigenvector.  We shall use the structural condition
\begin{equation}\tag{GN}\label{GN-assump}
\nabla\lambda_p(\bar U)\cdot r_p(\bar U)\ne0
\quad\text{for at least one nonzero characteristic family }p,
\end{equation}
where $\bar U=(1,0,1,0,0)^T$.  Subsection~\ref{sec:hyperbolicity} gives an explicit algebraic criterion equivalent to \eqref{GN-assump} and shows that failure of \eqref{GN-assump} requires two explicit algebraic degeneracy conditions to hold simultaneously.

\begin{theorem}[Finite-time gradient catastrophe]\label{thm:gradient}
Assume that $a\in C^4(I)$ on some open interval $I$ containing $v=1$, that
\[
a'(1)<0,
\]
and that \eqref{GN-assump} holds. Then, for every sufficiently small $\delta_0>0$, there exist compactly supported perturbations
\[
(v_0-1,u_0,\theta_0-1,q_0,F_0)\in C_c^3(\R)
\]
satisfying
\begin{equation}\label{compat-main}
F_{0x}=1-v_0,
\qquad
\int_\R(v_0-1)\,\dd x=0,
\end{equation}
and
\[
\|(v_0-1,u_0,\theta_0-1,q_0,F_0)\|_{L^\infty}<\delta_0,
\]
for which the corresponding maximal $H^3$ classical solution has a finite lifespan $T_{\max}<\infty$.  Moreover,
\begin{equation}\label{small-amplitude}
\sup_{0\le t<T_{\max}}\|(v-1,u,\theta-1,q,F)(t)\|_{L^\infty}\le C\delta_0,
\end{equation}
where $C$ is independent of $\delta_0$, while
\begin{equation}\label{grad-blow-main}
\limsup_{t\uparrow T_{\max}}
\bigl(\|v_x(t)\|_{L^\infty}+\|u_x(t)\|_{L^\infty}
+\|\theta_x(t)\|_{L^\infty}+\|q_x(t)\|_{L^\infty}\bigr)=+\infty.
\end{equation}
Moreover, the propagated constraint gives
\[
\sup_{t<T_{\max}}\|F_x(t)\|_{L^\infty}\le C\delta_0.
\]
If, in addition, $a$ is $C^\infty$ near $v=1$, the initial perturbation may be chosen in $C_c^\infty(\R)$.
\end{theorem}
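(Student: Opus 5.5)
The plan has four parts: reduce to a localized first-order system, build constraint-compatible simple-wave data, run a John-type bootstrap, and close with a Riccati comparison.

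\textbf{Reduction.} First rewrite \eqref{CEP} with $F=\phi_x/v$ as a closed first-order system in $U=(v,u,\theta,q,F)$. The energy equation \eqref{CEP-c} is used together with \eqref{pressure}, \eqref{energy} and \eqref{kappa} to obtain a quasilinear equation for $\theta$. The choice $\kappa=\kappa_0v$ makes the $q^2$ terms $v$-independent. The system reads
\[
U_t+A(U)U_x=B(U),\qquad B(\bar U)=0,
\]
where $F_t=-u$ (neutrality plus decay) and $u_t+p_x=F$. Here $F$ has zero speed, and the constraint $F_x=1-v$ is propagated, because $(F_x+v-1)_t=-u_x+u_x=0$. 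The condition $a'(1)<0$ makes the principal $4\times4$ block strictly hyperbolic at $\bar U$, with two acoustic speeds $\pm\lambda_1$ and two thermal speeds $\pm\lambda_2$; near equilibrium these are nonzero and distinct. This gives local $H^3$ well-posedness (Kato) together with a continuation criterion: $T_{\max}<\infty$ forces $\|U_x\|_{L^\infty}\to\infty$ as long as $\|U-\bar U\|_{L^\infty}$ stays small. I take the characteristic speeds and the genuine-nonlinearity coefficient $c_p=\nabla\lambda_p\cdot r_p(\bar U)\neq0$ for the family $p$ given by \eqref{GN-assump}.

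\textbf{Data.} Choose
\[
U_0=\bar U+\varepsilon\,w\!\left(\tfrac{x}{\eta}\right)r_p(\bar U)+\text{correction},
\]
where $w\in C_c^\infty$ and the sign of $w'$ at its steepest point is chosen so that $c_p\,w'<0$ is compressive. For neutrality, take $w=W'$ with $W$ compactly supported in the $v$-component. The $F$-component is then fixed by $F_0(x)=-\int_{-\infty}^x(v_0-1)\,dx$, which is compactly supported by the zero mean and is $O(\varepsilon\eta)$. Because the $F$-component of $r_p$ is not exactly compatible with the constraint, I project the data: I keep $(v,u,\theta,q)$ along $r_p$ and define $F$ by integration. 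The mismatch in the zero-speed mode then has size $O(\varepsilon\eta)$ in amplitude, and its effect on the $p$-characteristics is $O(\varepsilon^2\eta)$ after accounting for the coupling through $B$. This gives amplitude $\le C\varepsilon$ with $\varepsilon\sim\delta_0$ and gradient $\sim\varepsilon/\eta$.

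\textbf{Bootstrap.} Decompose $U_x=\sum_j w_j r_j$ and derive the John equations
\[
w_{p,t}+\lambda_p w_{p,x}=-c_p(U)\,w_p^2+\sum_{(j,k)\neq(p,p)}\gamma_{jk}w_jw_k+\text{(lower order from }B).
\]
The bootstrap hypotheses are: $|U-\bar U|\le C\varepsilon$; the transversal components satisfy $|w_j|\le C\varepsilon$ for $j\neq p$, because the waves separate by distinct speeds after time $O(\eta)$; and the total variation of $w_p$ along characteristics is controlled. Damping from $q$ and the source $B$ contributes only $O(1)\cdot|w|$, which is harmless on the time scale $T\sim\eta/\varepsilon\ll1$. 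This is why I choose $\eta$ small, say $\eta=\varepsilon$.

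\textbf{Riccati comparison.} Along the $p$-characteristic through the steepest point, this yields
\[
\dot w\le -\bigl(c_p+O(\varepsilon)\bigr)w^2+O(1)|w|+O(\varepsilon),
\]
with $|w(0)|\sim\varepsilon/\eta$. Hence $w\to\infty$ before time roughly $\eta/(|c_p|\varepsilon)$, which is shorter than the bootstrap horizon. This gives $T_{\max}<\infty$ and \eqref{grad-blow-main}. The amplitude bound \eqref{small-amplitude} comes from the bootstrap via integration of Riemann-invariant-like variables along characteristics, and $\|F_x\|=\|1-v\|\le C\delta_0$ follows from the propagated constraint.

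\textbf{Main obstacle.} The hardest step is closing the bootstrap for the non-$p$ components, specifically the zero-speed $F$ mode, which does not separate from the compressive wave. Here I would use the constraint directly, $F_x=1-v=O(\varepsilon)$, instead of the transport equation, so that $w_F$ never needs a separate estimate. Finally, the $C^\infty$ statement follows by taking $w\in C_c^\infty$ when $a\in C^\infty$.
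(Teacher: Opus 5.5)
\textbf{Verdict: the route is the paper's, but the scale choice $\eta=\varepsilon$ breaks your Riccati step.}

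Your overall route matches the paper's proof:
\begin{itemize}
\item localize with $F=\phi_x/v$ and $F_t=-u$, and propagate $F_x=1-v$;
\item take compressive $p$-wave data with $F_0$ rebuilt from the constraint;
\item run a John--H\"ormander--B\"arlin bootstrap and close with a Riccati comparison;
\item use the continuation criterion plus $F_x=1-v$ to place the blow-up in $(v,u,\theta,q)$.
\end{itemize}
Two of your deviations are sound and even simplify things. First, the linearized data $\bar U+\varepsilon w(x/\eta)r_p(\bar U)$ with $\int w=0$ are exactly neutral, with no need for the implicit-function correction of Lemma~\ref{lem:neutral-correction}. The cost is transversal modes $\varepsilon w'\,l_j(U_0)r_p(\bar U)=O(\varepsilon^2)$ in scaled variables, which is exactly the level the paper's bootstrap tolerates. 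Second, since $(l_k)_F=(r_k)_F=0$ for $k\neq0$, one has $w_0=F_y=\ell(1-v)$ exactly. So the Poisson mode is bounded by $\ell M$ directly, without going through the crossing argument.

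\textbf{The gap: the choice of scales.} The linear source term you mention is real. Since $G(\bar U)=0$, one has $H_{pp}(\bar U)=l_p\,DG(\bar U)\,r_p=-(l_p)_q(r_p)_q/\tau$, which is nonzero (in fact negative) for every propagating family. So Cattaneo relaxation damps $w_p$ at an $O(1)$ rate in physical time. This is harmless only if the steepening time $\eta/\varepsilon$ is small, i.e.\ the pulse width is $o(\varepsilon)$. With your choice $\eta=\varepsilon$, both $|w(0)|\sim\max|w'|$ and the steepening time are $O(1)$. Your inequality $\dot w\le-(c_p+O(\varepsilon))w^2+O(1)|w|+O(\varepsilon)$ then forces blow-up only if $|c_p|\max|w'|$ exceeds the damping constant, and nothing in your construction arranges that.

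In the paper's notation (width $\ell=\varepsilon\eta$), your choice is $\eta=1$. The comparison $\mathcal W'\ge a\mathcal W^2-b\varepsilon\eta\mathcal W$ with $\mathcal W(0)\approx A_*\varepsilon$ is supercritical only for $\eta<aA_*/(2b)$. This is why the paper introduces a second parameter $\eta$ and fixes it after $\varepsilon$, requiring $\eta<3\gamma_*A_*/(64G_*)$ and $K(\eta)<T$.

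\textbf{Consequence for your bootstrap.} Once the width is $o(\varepsilon)$, your bootstrap has to be restated. In physical variables your data have $w_j(0)$ of size $\varepsilon^2/\eta\gg\varepsilon$, so the hypothesis $|w_j|\le C\varepsilon$ already fails at $t=0$. What must be propagated is the scale-invariant hierarchy: the paper's $V=O(\varepsilon^2)$ against $w_p=O(\varepsilon)$ in the variables $x=\ell y$, $t=\ell s$. It must hold over the whole window $s\le T\varepsilon^{-1}$, with $T>K_0$ fixed before the bootstrap constants.

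Separation of the initial transversal content does not give this. The $p$-wave keeps emitting transversal waves through $\varepsilon\eta H_{jp}w_p$ and $\gamma_{jkp}w_kw_p$ during the entire window. What controls them is the crossing estimate of Lemma~\ref{lem:crossing}: via the two-form identity and $\Gamma_{ijj}=0$, it bounds $\int|w_p|\,ds$ along any non-$p$ characteristic by $c_\lambda^{-1}J(0)$ plus lower-order terms.
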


\begin{remark}[Small amplitude versus small Sobolev norm]\label{rem:small-amplitude}
Theorem~\ref{thm:gradient} is a small-amplitude, short-wave singularity result, not a small-$H^3$ result.  In the construction below the physical pulse has amplitude $O(\eps)$ and width $\ell=\eps\eta$, while its dominant initial gradient is $O(\eta^{-1})$.  More generally, a $k$th physical derivative of the fluid--thermal profile has the scaling
\[
\|\partial_x^k(U_0-\bar U)\|_{L^2}
=O\!\left(\eps^{3/2-k}\eta^{1/2-k}\right)
\]
for the fixed profiles used in the proof.  Thus the data need not be small in $H^3$, and the result does not conflict with global theories that assume small Sobolev norms.
\end{remark}

Thus the two theorems describe distinct singularity mechanisms.  In Theorem~\ref{thm:affine}, the specific volume itself reaches the boundary $v=0$ of the physical state space.  In Theorem~\ref{thm:gradient}, the state remains uniformly close to equilibrium and the breakdown occurs only through fluid--thermal derivatives.  In particular, for sufficiently small $\delta_0$, \eqref{small-amplitude} keeps $v$ and $\theta$ bounded away from zero, preserves positivity of
\[
\cC=C_v-\frac{\tau q^2}{\kappa_0\theta^2},
\]
and keeps the system strictly hyperbolic up to $T_{\max}$.

\subsection{Main difficulties and new ideas}\label{sec:novelty-strategy}

It is useful to separate the part of the argument that belongs to the general theory of hyperbolic balance laws from the part that is specific to the Euler--Poisson--Cattaneo system.  If the five components of the localized unknown could be prescribed independently, then the general result of B\"arlin \cite{Barlin2023} would strongly suggest finite-time gradient blow-up once strict hyperbolicity and genuine nonlinearity are verified.  We do not invoke that theorem as a black box: its pure-wave data are not automatically compatible with the Poisson constraint, so the characteristic estimates have to be reopened and shown to remain stable under the constraint-generated zero mode.  The present problem is different because the electric-field component is not independent: physically admissible data must satisfy the differential constraint and neutrality condition
\begin{equation}\label{intro-constraint}
F_{0x}=1-v_0,
\qquad
\int_{\R}(v_0-1)\,\dd x=0.
\end{equation}
A compactly supported pure simple wave for the unconstrained balance law generally violates \eqref{intro-constraint}.  Thus the main issue is not merely to produce a compressive genuinely nonlinear wave, but to produce one \emph{on the Poisson constraint manifold} and to prove that the correction required by the constraint does not destroy the Riccati steepening mechanism.

The proof of Theorem~\ref{thm:gradient} contains four model-specific ingredients.
\begin{enumerate}
\item \emph{Localization and propagation of the Poisson constraint.}
With $F=\phi_x/v$, the elliptic coupling is converted into a local zero-speed mode satisfying $F_t=-u$, while
\[
F_x=1-v
\]
is propagated exactly by the evolution.  The principal matrix therefore consists of four propagating fluid--thermal modes together with one Poisson mode of speed zero.

\item \emph{An explicit genuine-nonlinearity criterion.}
We compute the full characteristic structure near equilibrium and, in particular, the coefficient $\nabla\lambda_p\cdot r_p$.  The $q$-direction contribution is nonzero and must be retained.  The resulting algebraic criterion shows that the loss of genuine nonlinearity for all propagating families requires two explicit degeneracy conditions simultaneously; see Subsection~\ref{sec:hyperbolicity}.

\item \emph{Constraint-compatible compressive data.}
Starting from an integral curve of a genuinely nonlinear eigenfield, we add an $O(\varepsilon^2)$ correction that enforces exact neutrality while preserving the leading $O(\varepsilon)$ compression.  Reconstructing $F_0$ from \eqref{intro-constraint} then produces, after the short-scale rescaling in \eqref{notation-scaling}, only a lower-order zero-speed component
\[
w_0(0,\cdot)=O(\varepsilon^2\eta),
\]
whereas the selected compressive mode has size $O(\varepsilon)$.

\item \emph{Stability of the characteristic blow-up mechanism under the constraint.}
We modify the John--H\"ormander--B\"arlin characteristic estimates to accommodate the nonzero Poisson mode.  A crossing estimate for transversal characteristics shows that all nonprincipal modes remain $O(\varepsilon^2)$ on the $O(\varepsilon^{-1})$ scaled time interval.  The principal characteristic derivative therefore satisfies a Riccati inequality with only lower-order errors, which forces finite-time gradient catastrophe.
\end{enumerate}

The model-specific contribution of the gradient-catastrophe argument is to show that the classical genuinely nonlinear steepening mechanism persists after the dynamics are restricted to the Poisson constraint manifold.  The constraint is not discarded or treated as an arbitrary source; it is enforced exactly in the initial data and propagated throughout the evolution, while its dynamical effect appears only through a perturbative zero-speed mode.  This mechanism is qualitatively different from the explicit affine collapse of Theorem~\ref{thm:affine}, where the state itself reaches the boundary $v=0$ of the physical region.

\subsection{Notation and conventions}\label{sec:notation}

We collect here the notation used throughout the paper.  Once the Poisson field has been localized, the five-component state and its equilibrium are written as
\begin{equation}\label{notation-U}
U=(v,u,\theta,q,F)^T,
\qquad
\bar U=(1,0,1,0,0)^T,
\end{equation}
while
\[
W=(v,u,\theta,q)^T,
\qquad
\cC(v,\theta,q)=C_v-\frac{\tau q^2}{\kappa_0\theta^2}
\]
denote, respectively, the fluid--thermal variables and the effective heat-capacity coefficient appearing in the temperature equation.  The physical variables are denoted by $(t,x)$.  In the short-scale blow-up construction we use
\begin{equation}\label{notation-scaling}
x=\ell y,
\qquad
t=\ell s,
\qquad
\ell=\eps\eta,
\end{equation}
where $\eps>0$ measures the amplitude and $\eta>0$ the inverse size of the initial compression; quantities written in $(s,y)$ are understood to be in the scaled variables unless otherwise stated.

For the principal matrix $A(U)$, the characteristic speeds and corresponding right and left eigenvectors are denoted by
\[
\lambda_i(U),\qquad r_i(U),\qquad l_i(U),
\qquad l_i(U)r_j(U)=\delta_{ij},
\]
where $\delta_{ij}$ is the Kronecker symbol.
The indices $i,j,k$ range over the five characteristic families.  The index $0$ is reserved for the zero-speed Poisson family, whereas $p$ denotes a fixed nonzero genuinely nonlinear family selected in the proof of Theorem~\ref{thm:gradient}.  In scaled variables,
\[
L_i:=\partial_s+\lambda_i(U)\partial_y,
\qquad
w_i:=l_i(U)U_y
\]
denote the $i$th characteristic derivative and directional operator.  The symbol $\mathsf S(U)$ is reserved for the symmetrizer in Proposition~\ref{prop:LWP}, while $S(t)$ denotes the characteristic-strip width.  The functionals $J,S,M,V$ introduced in \eqref{Jdef}--\eqref{Vdef} measure, respectively, the $L^1$ strength of the principal wave, the width of its characteristic strip, the state amplitude, and the total transversal-wave contamination.

For the gradient-catastrophe theorem, a classical $H^3$ solution means
\[
U-\bar U\in C([0,T);H^3(\R))\cap C^1([0,T);H^2(\R)),
\]
with values in the physical strictly hyperbolic region under consideration; in one space dimension this regularity gives the pointwise derivatives used in the characteristic argument by Sobolev embedding.  We write $\|\cdot\|_{L^p}=\|\cdot\|_{L^p(\R)}$ and $\|\cdot\|_{H^m}=\|\cdot\|_{H^m(\R)}$, and $|\cdot|$ denotes the Euclidean norm for vectors (and a fixed equivalent norm for matrices).  The symbol $C>0$ denotes a generic constant which may change from line to line; constants carrying subscripts, such as $C_K$, $C_J$, or $c_\lambda$, are fixed once the indicated compact set, profile, or bootstrap parameters have been chosen.  In the small-parameter arguments, $O(\cdot)$ is uniform as $(\eps,\eta)\to(0,0)$ with the physical parameters and the fixed profiles held fixed.  In one-variable asymptotic statements, such as $t\uparrow T_c$, the meaning of $O(\cdot)$ is with all remaining parameters fixed.  The regularity imposed on the equation of state is local in $v$: Theorem~\ref{thm:affine} uses only $a\in C^1((0,1])$, Theorem~\ref{thm:gradient} assumes $a\in C^4$ near $v=1$ (which yields the $C_c^3$ data used in the $H^3$ argument), and Proposition~\ref{prop:LWP} uses $a\in C^{m+1}$ on the compact $v$-interval visited by the local solution.  No regularity away from the relevant state range is needed.  Finally, $\supp f$ denotes the support of $f$; $C_c^k(\R)$ denotes the compactly supported $C^k$ functions, and $C_c^\infty(\R)$ the smooth compactly supported functions on $\R$.

\subsection{Related literature and organization}\label{sec:literature}

The literature most closely related to the present work falls into three groups, and each addresses only part of the mechanism studied here.  First, Cattaneo's law \cite{Cattaneo1948} replaces Fourier's constitutive relation by a relaxation law and is the classical starting point for finite-speed heat-flux models.  General structural results for hyperbolic relaxation systems include entropy-based relaxation theory \cite{ChenLevermoreLiu1994} and critical-threshold results for quasilinear relaxation systems \cite{LiLiu2009}.  For the Euler--Cattaneo--Maxwell system, Kawashima and Ueda \cite{KawashimaUeda2016} established a mathematical entropy, regularity-loss decay, and small-data global stability, while Cao and Tong \cite{CaoTong2026} further studied its decay behavior.  The Cattaneo--Christov literature displays a different structural picture: Angeles \cite{Angeles2022} proved non-hyperbolicity for the multidimensional inviscid coupling, Zhu \cite{Zhu2024} established one-dimensional inviscid well-posedness near equilibrium, and Crin-Barat, Kawashima and Xu \cite{CrinBaratKawashimaXu2025} proved global small-data well-posedness and a relaxation limit for a viscous Cattaneo--Christov approximation.  Hyperbolized or relaxed Navier--Stokes models have also been studied from the viewpoints of local and small-data global existence \cite{RackeSaal2012I,RackeSaal2012II}, Cattaneo heat-conduction limits \cite{HuRacke2016}, and finite-time singularity formation \cite{Schowe2016,HuRackeWang2022,HuRacke2024,Wang2025,Racke2025}.  Recent complementary global theories include the optimal-decay result of Li, Tang and Zhang \cite{LiTangZhang2026} for compressible Navier--Stokes flow with hyperbolic heat conduction.  Most recently, Zhao \cite{Zhao2026HNS} proved gradient catastrophe for a one-dimensional hyperbolic Navier--Stokes system with nonlinear Cattaneo heat conduction and Maxwell-type stress.  These results show that thermal relaxation is compatible with genuinely hyperbolic steepening mechanisms, but they do not contain the Poisson differential constraint that restricts the admissible data in \eqref{CEP}.

Second, finite-time breakdown in Euler--Poisson dynamics has a long history.  The critical-threshold viewpoint---global smoothness versus finite-time breakdown according to the initial configuration---was introduced for Euler--Poisson models by Engelberg, Liu and Tadmor \cite{EngelbergLiuTadmor2001} and was developed in pressure, radial-symmetry, and damping settings in \cite{TadmorWei2008,WeiTadmorBae2012,BhatnagarLiu2020}.  Complementary finite-time blow-up criteria for attractive Euler--Poisson systems were obtained in \cite{ChaeTadmor2008,ChengTadmor2009}.  Of particular relevance, Wang and Chen \cite{WangChen1998} proved finite-time singularity formation for compressible Euler--Poisson fluids with heat diffusion and damping relaxation for smooth data with sufficiently large $C^1$ norm, showing that those dissipative mechanisms do not by themselves preclude breakdown.  The thermal mechanism there is, however, diffusive rather than Cattaneo-hyperbolic.  At the opposite end, Wu and Wu \cite{WuWu2026} obtained global well-posedness and optimal time decay for the \emph{viscous} three-dimensional Navier--Stokes--Poisson system with Cattaneo heat conduction.  Thus the available Poisson literature covers both singular and globally regular regimes, but does not resolve the inviscid, constraint-compatible gradient-catastrophe problem considered here.

Third, the steepening mechanism used in the second main theorem belongs to the classical singularity theory for nonlinear hyperbolic equations.  In one space dimension, John \cite{John1974} and Liu \cite{Liu1979} developed derivative blow-up mechanisms for strictly hyperbolic and genuinely nonlinear wave propagation, building on earlier singularity estimates such as Lax \cite{Lax1964}; H\"ormander \cite{Hormander1987} developed related lifespan estimates, while Sideris \cite{Sideris1985} established a complementary finite-time singularity mechanism for three-dimensional compressible Euler flow.  See also \cite{Dafermos2016} for general background on hyperbolic conservation laws and balance laws.  B\"arlin \cite{Barlin2023} adapted the John--H\"ormander characteristic method to strictly hyperbolic systems with general source terms and proved finite-time derivative blow-up for suitable smooth data.  His theorem is the natural unconstrained comparison result for our problem.  The point at which the present argument departs from that theory is precisely the Poisson compatibility condition: the electric-field component cannot be chosen independently, and an exact pure simple wave need not lie on the constraint manifold.  The construction and estimates in Section~\ref{sec:gradient-proof} show that this obstruction can nevertheless be absorbed into a lower-order zero-speed mode.  To the best of our knowledge, no prior work establishes gradient catastrophe for the specific inviscid one-dimensional closure \eqref{CEP}--\eqref{kappa} while imposing the Poisson compatibility and neutrality constraints exactly on the compressive initial data.  This statement is intentionally model-specific: closely related viscous Poisson--Cattaneo systems and non-Poisson hyperbolic Cattaneo fluid models have been studied in the works cited above.

The remainder of the paper is organized around the two main theorems.  Section~\ref{sec:density-proof} proves the explicit density-collapse result.  Section~\ref{sec:gradient-proof} proves the gradient-catastrophe theorem through localization, compatible data construction, the constrained characteristic bootstrap, and the Riccati argument.  Section~\ref{sec:discussion} concludes with several open problems.

\section{Proof of the density-collapse theorem}\label{sec:density-proof}\label{sec:density-collapse}

We first prove Theorem~\ref{thm:affine}.  The construction is useful not only because it gives an explicit singular solution, but also because it isolates a breakdown mechanism that is different from the genuinely nonlinear steepening studied in Section~\ref{sec:gradient-proof}.

\paragraph{Why an affine ansatz is natural.}
In Lagrangian coordinates, the specific volume is the spatial derivative of the particle map.  A spatially homogeneous specific volume therefore corresponds to an affine deformation of the physical position.  This suggests taking
\[
v(t,x)=b(t),\qquad u(t,x)=b'(t)x.
\]
The Poisson equation then forces the electric field variable $F=\phi_x/v$ to be linear in $x$, while a spatially homogeneous temperature allows the Cattaneo equation to be satisfied with $q=0$.  Thus the full PDE closes on a single scalar ODE for the compression factor $b$.

\begin{proposition}[Affine family]\label{prop:affine-family}
Assume that $a\in C^1$ on an interval containing the range of $b$. Let $b(t)>0$ solve
\begin{equation}\label{b-ode}
b''+b=1.
\end{equation}
Fix $b(0)=b_0>0$ and $\Theta_0>0$, and define
\begin{equation}\label{general-affine}
\begin{aligned}
v(t,x)&=b(t),\\
u(t,x)&=b'(t)x,\\
q(t,x)&=0,\\
\theta(t,x)&=\Theta_0\exp\!\left(\frac{\alpha}{C_v}[b(t)-b_0]\right),\\
F(t,x)&=[1-b(t)]x.
\end{aligned}
\end{equation}
Then \eqref{general-affine}, together with $F=\phi_x/v$, satisfies \eqref{CEP} as long as $b>0$. A corresponding potential is
\begin{equation}\label{general-phi}
\phi(t,x)=\frac12b(t)[1-b(t)]x^2+C(t),
\end{equation}
where $C(t)$ is arbitrary.
\end{proposition}

\begin{proof}
The mass equation gives $b'=u_x$.  Since $v$, $\theta$, and $q$ are independent of $x$, the pressure is a function of $t$ only and hence $p_x=0$.  The momentum equation therefore reduces to
\[
b''x=F=(1-b)x,
\]
which is exactly \eqref{b-ode}.  The Cattaneo equation is automatic because $q=0$ and $\theta_x=0$.  For the energy equation, using $P'(b)=-a(b)$, we obtain
\begin{align*}
e_t+pu_x
&=C_v\theta'+[-a(b)-\alpha]b'
+[a(b)+\alpha(1-\theta)]b'\\
&=C_v\theta'-\alpha\theta b',
\end{align*}
which vanishes by the definition of $\theta$.  Finally,
$F_x=1-b$ and $\phi_x=vF$, so the Poisson equation is satisfied as well.
\end{proof}

The scalar equation \eqref{b-ode} can be solved explicitly:
\begin{equation}\label{bgeneral}
b(t)=1+(b_0-1)\cos t+b_1\sin t,
\qquad b_1:=b'(0).
\end{equation}
Hence an affine solution collapses precisely when the oscillatory part in \eqref{bgeneral} drives $b$ to zero.  The one-parameter family used in Theorem~\ref{thm:affine} is the simplest purely compressive choice.

\begin{proof}[Proof of Theorem \ref{thm:affine}]
Take
\[
b(t)=1-\mu\sin t,
\qquad \mu>1.
\]
Then $b''+b=1$, $b(0)=1$, and the first zero of $b$ is
\[
T_c=\arcsin(\mu^{-1})\in(0,\pi/2).
\]
Proposition~\ref{prop:affine-family} gives the solution stated in Theorem~\ref{thm:affine}.  Moreover,
\[
b'(T_c)=-\mu\cos T_c=-\sqrt{\mu^2-1},
\]
so Taylor expansion at $T_c$ yields
\begin{equation}\label{collapse-rate}
b(t)=\sqrt{\mu^2-1}\,(T_c-t)+O((T_c-t)^2)
\qquad (t\uparrow T_c).
\end{equation}
Consequently the Eulerian density satisfies the first-order asymptotic law
\begin{equation}\label{density-rate}
\rho(t)=\frac1{v(t)}
\sim \frac{1}{\sqrt{\mu^2-1}\,(T_c-t)}.
\end{equation}
This proves the finite-time density collapse.
\end{proof}

\paragraph{Nature of the singularity.}
The collapse in Theorem~\ref{thm:affine} is a loss of the physical state condition $v>0$, not a shock-type gradient catastrophe.  Indeed,
\[
u_x(t,x)=b'(t)=-\mu\cos t
\]
remains finite as $t\uparrow T_c$, while
\[
\theta(t,x)\longrightarrow \exp(-\alpha/C_v)>0,
\qquad q(t,x)\equiv0,
\qquad F(t,x)\longrightarrow x
\]
at every fixed spatial point.  Thus the Jacobian of the Lagrangian deformation vanishes at a finite rate while the compressive velocity gradient stays bounded.  This should be contrasted with Theorem~\ref{thm:gradient}, where $v$ remains uniformly separated from zero and the breakdown is caused by the divergence of first derivatives inside the physical state space.

The construction also shows why the affine solution is largely insensitive to the detailed form of $a(v)$.  Along the ansatz, $p_x=0$, and the identity $P'(v)=-a(v)$ cancels the $a(b)$ contribution in the energy equation.  The density collapse is therefore generated by the interaction of homogeneous compression with the Poisson restoring field, whereas the equation of state enters the genuinely nonlinear characteristic mechanism studied later.

\begin{remark}[Conditional compact localization]\label{rem:affine-localize}
The affine solution itself is not a finite-energy perturbation of equilibrium because $u(t,x)$ grows linearly in $x$.  A compactly supported localization is nevertheless available provided the affine trajectory stays strictly hyperbolic for every $0\le t<T_c$ and the corresponding characteristic cone has finite width up to $T_c$.  More precisely, let
\[
\Lambda_{\rm aff}(t)
:=\max_j |\lambda_j(U_{\rm aff}(t))|
\]
denote the maximal characteristic speed along the affine state and assume
\begin{equation}\label{affine-speed-integrability}
\int_0^{T_c}\Lambda_{\rm aff}(t)\,\dd t<\infty.
\end{equation}
Set
\[
R_c:=\int_0^{T_c}\Lambda_{\rm aff}(t)\,\dd t.
\]
Choose $L>R_c$ and prescribe at $t=0$
\[
v_0=\theta_0=1,\qquad q_0=F_0=0
\]
globally, while $u_0(x)=-\mu x$ on $[-L,L]$ and is smoothly cut off to zero outside a slightly larger interval.  Finite propagation and uniqueness then imply that, as long as both solutions remain classical, the compact-data solution agrees with the affine solution in the central domain bounded by the characteristics issuing from $\pm L$.  In particular, the compact-data solution either loses classical regularity before $T_c$, or its central specific volume follows the affine profile and approaches zero at $T_c$.

The assumptions preceding \eqref{affine-speed-integrability} are essential for this localization argument and are not part of Theorem~\ref{thm:affine}.  Without them, the theorem should be understood as an explicit non-finite-energy collapse solution.  Establishing an unconditional compact-data density-collapse theorem is a separate problem, because the characteristic structure may degenerate as $v\downarrow0$.
\end{remark}

\section{Proof of the gradient-catastrophe theorem}\label{sec:gradient-proof}

We prove Theorem~\ref{thm:gradient} in four steps.  We first localize the Poisson coupling and identify the characteristic structure of the resulting balance law.  We then establish the local continuation theory and construct neutral compressive data lying exactly on the Poisson constraint manifold.  The third step is a characteristic bootstrap showing that the zero-speed Poisson mode and all transversal modes remain lower order.  Finally, the dominant genuinely nonlinear mode is shown to satisfy a Riccati inequality, which yields finite-time derivative blow-up.

\subsection{Localized formulation and characteristic structure}\label{sec:localization}\label{sec:hyperbolicity}

The proof of Theorem~\ref{thm:gradient} starts by separating two roles of the Poisson field. Its spatial derivative is constrained by the density, but after a suitable normalization its time evolution is local. Once this field variable is introduced, the remaining thermodynamic equations form a first-order balance law whose principal part contains four fluid--thermal modes and one zero-speed Poisson mode. We carry out these two reductions before turning to the characteristic calculation.

\medskip
\noindent\textbf{Localizing the Poisson field.}
Set
\begin{equation*}
F=\frac{\phi_x}{v}.
\end{equation*}
Then the Poisson equation \eqref{CEP-e} becomes the differential constraint
\begin{equation}\label{constraint}
F_x=1-v.
\end{equation}
For the compact perturbations used in Theorem~\ref{thm:gradient}, we impose the equilibrium normalization
\[
F(t,-\infty)=u(t,-\infty)=0.
\]
Hence \eqref{constraint} gives
\begin{equation}\label{F-integral-representation}
F(t,x)=\int_{-\infty}^x[1-v(t,y)]\,\dd y.
\end{equation}
Using $v_t=u_x$ and differentiating under the integral yields
\begin{equation}\label{Ft}
F_t=-u.
\end{equation}
Thus the Poisson equation no longer has to be solved at each time: it survives as an invariant spatial constraint, while $F$ evolves by the local equation \eqref{Ft}. If also $F(t,+\infty)=0$, then \eqref{constraint} is equivalent to the neutrality condition
\[
\int_{\R}(v(t,x)-1)\,\dd x=0,
\]
which explains the compatibility condition imposed on the compressive data below.

\begin{lemma}[Propagation of the Poisson constraint]\label{lem:constraint}
Suppose $v,u,F$ satisfy
\[
v_t-u_x=0,\qquad F_t=-u.
\]
Then
\[
\partial_t(F_x+v-1)=0.
\]
Consequently, \eqref{constraint} holds throughout the classical lifespan whenever it holds initially.
\end{lemma}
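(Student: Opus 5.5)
The plan is a direct computation followed by integration in time. Since the solution is classical in the sense of the paper, i.e. $U-\bar U\in C([0,T);H^3)\cap C^1([0,T);H^2)$, the mixed derivatives $F_{xt}$ and $F_{tx}$ exist, are continuous, and coincide. I will therefore differentiate the local Poisson equation $F_t=-u$ in $x$, which gives $F_{xt}=-u_x$. Adding the mass equation $v_t=u_x$ produces
\[
\partial_t(F_x+v-1)=F_{xt}+v_t=-u_x+u_x=0 .
\]
This identity holds pointwise in $(t,x)$ throughout the classical lifespan.

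For the consequence, I fix $x$ and integrate the identity from $0$ to $t$:
\[
(F_x+v-1)(t,x)=(F_x+v-1)(0,x)=F_{0x}(x)+v_0(x)-1 .
\]
If the initial data satisfy \eqref{compat-main}, the right-hand side vanishes identically. Hence \eqref{constraint} holds for every $t$ in the lifespan.

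The only step requiring care is the interchange of the $x$- and $t$-derivatives of $F$. If one only has $F\in C^1([0,T);H^2)$, I would instead argue in the distributional sense: $\partial_t F_x=\partial_x F_t=-u_x$ in $\mathcal D'$, and $F_x+v-1\in C^1([0,T);H^1)$, so its time derivative is the zero element of $C([0,T);H^1)$. This again gives constancy in $t$. No real obstacle is expected here. The point to emphasize is that the argument uses only the local equation \eqref{Ft} and not the integral representation \eqref{F-integral-representation}, so the constraint is propagated by the localized five-component system itself.
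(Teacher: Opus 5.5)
Your proposal is correct and follows the paper's proof: the paper's entire argument is the one-line computation $\partial_t(F_x+v-1)=(F_t)_x+v_t=-u_x+u_x=0$, with the time integration and the interchange of derivatives left implicit. Your regularity remarks (via $H^2\subset C^1$ in one dimension, or the distributional version) and the explicit integration from $0$ to $t$ only spell out those implicit steps.
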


\begin{proof}
Directly,
\[
\partial_t(F_x+v-1)=(F_t)_x+v_t=-u_x+u_x=0.
\]
\end{proof}

\medskip
\noindent\textbf{Reduction to a local balance law.}
The choice $\kappa=\kappa_0v$ is especially convenient here. The Cattaneo law becomes
\begin{equation}\label{q-local}
\tau q_t+q+\kappa_0\theta_x=0,
\end{equation}
and the constitutive relations reduce to
\begin{align*}
p&=a(v)+\alpha(1-\theta)-\frac{\tau q^2}{2\kappa_0v\theta},\\
e&=C_v\theta+P(v)-\alpha(v-1)+\frac{\tau q^2}{\kappa_0\theta}.
\end{align*}
Define
\begin{equation*}
\cC(v,\theta,q)=C_v-\frac{\tau q^2}{\kappa_0\theta^2}.
\end{equation*}
Since
\[
e_v=-a(v)-\alpha,\qquad e_\theta=\cC,
\qquad e_q=\frac{2\tau q}{\kappa_0\theta},
\]
and \eqref{q-local} gives
\[
q_t=-\frac{q}{\tau}-\frac{\kappa_0}{\tau}\theta_x,
\]
the internal-energy equation is equivalent, wherever $\cC>0$, to
\begin{equation*}
\cC\theta_t-
\left(\alpha\theta+\frac{\tau q^2}{2\kappa_0v\theta}\right)u_x
-\frac{2q}{\theta}\theta_x+q_x
=\frac{2q^2}{\kappa_0\theta}.
\end{equation*}
Therefore, with
\[
U=(v,u,\theta,q,F)^T,
\]
the evolution equations take the local balance-law form
\begin{equation}\label{local-system}
U_t+A(U)U_x=G(U),
\end{equation}
where
\begin{equation}\label{A-matrix}
A(U)=
\begin{pmatrix}
0&-1&0&0&0\\
p_v&0&p_\theta&p_q&0\\
0&B&C&D&0\\
0&0&\kappa_0/\tau&0&0\\
0&0&0&0&0
\end{pmatrix},
\end{equation}
with
\begin{align*}
p_v&=a'(v)+\frac{\tau q^2}{2\kappa_0v^2\theta},
& p_\theta&=-\alpha+\frac{\tau q^2}{2\kappa_0v\theta^2},
& p_q&=-\frac{\tau q}{\kappa_0v\theta},\\
B&=-\frac{\alpha\theta+\dfrac{\tau q^2}{2\kappa_0v\theta}}{\cC},
& C&=-\frac{2q}{\theta\cC},
& D&=\frac1{\cC},
\end{align*}
and
\begin{equation*}
G(U)=
\begin{pmatrix}
0\\ F\\ \dfrac{2q^2}{\kappa_0\theta\cC}\\ -q/\tau\\ -u
\end{pmatrix}.
\end{equation*}
The principal matrix has the block structure
\begin{equation}\label{blockA}
A(U)=\begin{pmatrix}A_4(v,\theta,q)&0\\0&0\end{pmatrix}.
\end{equation}
This identity is the structural reason that the Poisson correction is perturbative in the characteristic argument: $F$ has speed zero and does not enter the principal fluid--thermal block, although it feeds back through the lower-order source $G$.

\begin{proposition}[Equivalence with the original CEP system]\label{prop:equiv}
Let $v>0$, $\theta>0$, and $\cC>0$. A classical solution $U=(v,u,\theta,q,F)$ of \eqref{local-system} satisfying \eqref{constraint} determines a solution of \eqref{CEP} by
\begin{equation}\label{phi-recover}
\phi(t,x)=\phi(t,x_0)+\int_{x_0}^x v(t,y)F(t,y)\,\dd y.
\end{equation}
Conversely, let $(v,u,\theta,q,\phi)$ be a classical solution of \eqref{CEP}, put $F=\phi_x/v$, and assume the equilibrium far field
\begin{equation*}
F(t,-\infty)=u(t,-\infty)=0
\end{equation*}
together with enough decay to justify differentiation in \eqref{F-integral-representation}. Then $U=(v,u,\theta,q,F)$ solves \eqref{local-system}. These assumptions hold for the compact perturbations used below.
\end{proposition}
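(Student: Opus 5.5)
The plan is to check the two directions separately. The main point in each is that the reformulation of the energy equation is reversible wherever $\cC>0$. In both directions the rows of \eqref{local-system} for $v$, $u$ and $q$ correspond line by line to \eqref{CEP-a}, \eqref{CEP-b} with $F=\phi_x/v$, and to \eqref{CEP-d} rewritten as \eqref{q-local} using $\kappa=\kappa_0v$. Only the temperature row and the Poisson relations need work.

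\emph{Forward direction.} Let $U$ be a classical solution of \eqref{local-system} satisfying \eqref{constraint}. Define $\phi$ by \eqref{phi-recover}, with any choice of $\phi(t,x_0)$, e.g.\ zero. Then $\phi_x=vF$, so $\phi_x/v=F$. Equation \eqref{CEP-e} is exactly \eqref{constraint}, and \eqref{CEP-b} is the second row of \eqref{local-system}.

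For \eqref{CEP-c}, expand by the chain rule:
\[
e_t=e_vv_t+e_\theta\theta_t+e_qq_t=(-a-\alpha)u_x+\cC\theta_t+\frac{2\tau q}{\kappa_0\theta}q_t .
\]
Substitute $q_t=-q/\tau-\kappa_0\theta_x/\tau$ and $p=a+\alpha(1-\theta)-\tau q^2/(2\kappa_0v\theta)$. The $a$-terms and the constant $\alpha$-terms cancel, which gives
\[
e_t+pu_x+q_x=\cC\theta_t-\Bigl(\alpha\theta+\frac{\tau q^2}{2\kappa_0v\theta}\Bigr)u_x-\frac{2q}{\theta}\theta_x+q_x-\frac{2q^2}{\kappa_0\theta}.
\]
This is $\cC$ times the third row of \eqref{local-system}. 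Since $\cC>0$, \eqref{CEP-c} holds if and only if the $\theta$-equation holds. Note that the fifth row, $F_t=-u$, is not needed in this direction. The bookkeeping here is routine; the step that needs care is this cancellation.

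\emph{Converse direction.} Start from a classical solution of \eqref{CEP} and put $F=\phi_x/v$. Then \eqref{CEP-e} gives $F_x=1-v$. The first four rows follow from the same identity read backwards: divide by $\cC>0$. It remains to derive $F_t=-u$. Integrate \eqref{constraint} from $-\infty$ using $F(t,-\infty)=0$; this gives \eqref{F-integral-representation}. Differentiate in $t$, which is justified by the assumed decay, e.g.\ local uniform integrability of $v_t=u_x$. Then use \eqref{CEP-a}:
\[
F_t(t,x)=-\int_{-\infty}^x u_x(t,y)\,\dd y=-u(t,x)+u(t,-\infty)=-u(t,x).
\]

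\emph{The compact perturbations used below.} For these, $v-1$ and $u$ vanish outside a bounded set at each time. For a classical solution this follows from finite propagation speed, or directly from $C^1$ compactly supported data. Hence the limits are attained, and differentiation under the integral is elementary.

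The main obstacle is this justification of $F_t=-u$, which depends on the far-field normalization. Without $F(t,-\infty)=u(t,-\infty)=0$, one gets only $F_t=-u+c(t)$, with $c(t)=F_t(t,-\infty)+u(t,-\infty)$. So the plan is to make that normalization explicit, exactly as the statement assumes.
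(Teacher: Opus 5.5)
Your proposal is correct and follows essentially the same route as the paper. In the forward direction you recover $\phi_x=vF$ from \eqref{phi-recover} and reuse the energy reduction; you usefully write out the identity showing that $e_t+pu_x+q_x$ equals $\cC$ times the $\theta$-row, which is what makes the reduction reversible. In the converse you integrate \eqref{constraint} from $-\infty$ and differentiate \eqref{F-integral-representation} in $t$, using $v_t=u_x$ and $u(t,-\infty)=0$. One small tightening: compact support of $v-1$ and $u$ at positive times follows from finite propagation speed (with $G(\bar U)=0$), not from compactly supported data alone.
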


\begin{proof}
If $U$ solves the localized system and \eqref{constraint}, then \eqref{phi-recover} gives $\phi_x=vF$, so the momentum and Poisson equations recover \eqref{CEP-b} and \eqref{CEP-e}; the other equations are exactly the reductions above. Conversely, the Poisson equation and the normalization at $-\infty$ give \eqref{F-integral-representation}. Therefore
\[
F_t(t,x)=-\int_{-\infty}^x v_t(t,y)\,\dd y
=-\int_{-\infty}^x u_x(t,y)\,\dd y=-u(t,x),
\]
and the preceding calculation yields \eqref{local-system}. The value $\phi(t,x_0)$ is only the additive potential gauge and has no effect on $F$.
\end{proof}

\paragraph{Field-energy identity.}
Although it is not needed in the characteristic blow-up argument, the localized formulation also identifies the correct conserved field energy. If $E=e+\frac12u^2$, then
\begin{equation*}
E_t+(pu+q)_x=uF.
\end{equation*}
Using $F_t=-u$ gives
\begin{equation*}
\left(E+\frac12F^2\right)_t+(pu+q)_x=0.
\end{equation*}
Accordingly,
\begin{equation*}
\widetilde\chi(t,x)=\int_{-\infty}^x
\left(E(t,\xi)-\bar E+\frac12F(t,\xi)^2\right)\,\dd\xi
\end{equation*}
satisfies $\widetilde\chi_t+pu+q=0$ whenever the far-field flux vanishes.

\medskip
\noindent\textbf{Characteristic structure near equilibrium.}
The preceding reduction separates the two effects of the electric field: the invariant relation \eqref{constraint} restricts admissible data, whereas the principal matrix \eqref{blockA} leaves the propagating characteristic structure entirely in the four fluid--thermal variables. Freezing the principal part at equilibrium yields the strict-hyperbolicity and genuine-nonlinearity conditions used below.

Let
\[
\bar U=(1,0,1,0,0)^T.
\]
Set
\begin{equation*}
s:=-a'(1),\qquad d:=\frac{\kappa_0}{\tau C_v},\qquad h:=\frac{\alpha^2}{C_v}.
\end{equation*}
When $s>0$, define
\begin{equation}\label{xpm}
x_\pm=\frac{s+d+h\pm\sqrt{(s+d+h)^2-4sd}}{2}.
\end{equation}
At equilibrium, the nonzero part of the principal matrix is
\begin{equation*}
A_4(\bar U)=
\begin{pmatrix}
0&-1&0&0\\
-s&0&-\alpha&0\\
0&-\alpha/C_v&0&1/C_v\\
0&0&\kappa_0/\tau&0
\end{pmatrix},
\end{equation*}
where $s=-a'(1)$.

\begin{proposition}[Characteristic speeds]\label{prop:hyper}
Assume $a'(1)<0$. Then \eqref{local-system} is strictly hyperbolic in a neighborhood of $\bar U$. Its equilibrium characteristic speeds are
\begin{equation*}
-c_+,-c_-,0,c_-,c_+,
\qquad c_\pm=\sqrt{x_\pm},
\end{equation*}
where $x_\pm$ are given by \eqref{xpm}. Moreover,
\begin{equation}\label{root-location}
0<x_-<\min\{s,d\}\le\max\{s,d\}<x_+.
\end{equation}
\end{proposition}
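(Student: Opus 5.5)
The plan is to compute the characteristic polynomial of $A(\bar U)$ directly, reduce it to a quadratic in $X=\lambda^2$, locate the roots by a sign argument, and then extend strict hyperbolicity to a neighborhood by continuity. By the block structure \eqref{blockA}, $\det(A(U)-\lambda I)=-\lambda\,\det(A_4(U)-\lambda I)$. So the equilibrium speeds are $0$ together with the four eigenvalues of $A_4(\bar U)$.

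\textbf{Step 1: the quartic.} I would write the eigenvalue problem $A_4(\bar U)r=\lambda r$ with $r=(v,u,\theta,q)^T$ and $\lambda\neq0$. The four rows read
\[
-u=\lambda v,\qquad -sv-\alpha\theta=\lambda u,\qquad -\tfrac{\alpha}{C_v}u+\tfrac1{C_v}q=\lambda\theta,\qquad \tfrac{\kappa_0}{\tau}\theta=\lambda q.
\]
Eliminating $v=-u/\lambda$ and $q=\kappa_0\theta/(\tau\lambda)$ gives two relations:
\[
\alpha\theta=\frac{(s-\lambda^2)}{\lambda}\,u,\qquad
\frac{\alpha}{C_v}u=\frac{(d-\lambda^2)}{\lambda}\,\theta .
\]
A nontrivial solution exists iff $h\lambda^2=(s-\lambda^2)(d-\lambda^2)$. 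To avoid dividing by $\lambda$, I would confirm this by expanding the $4\times4$ determinant directly, which gives
\[
\det(A_4(\bar U)-\lambda I)=\lambda^4-(s+d+h)\lambda^2+sd .
\]
Since $\det A_4(\bar U)=sd\neq0$, the value $\lambda=0$ is not a root of the quartic.

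\textbf{Step 2: location of the roots.} Set $f(X)=(s-X)(d-X)-hX=X^2-(s+d+h)X+sd$. Its discriminant is
\[
(s+d+h)^2-4sd=(s-d)^2+2h(s+d)+h^2>0,
\]
because $s=-a'(1)>0$, $d>0$ and $h=\alpha^2/C_v>0$. Hence $f$ has two distinct real roots $x_\pm$, given by \eqref{xpm}. The sign pattern is:
\[
f(0)=sd>0,\qquad f(s)=-hs<0,\qquad f(d)=-hd<0,\qquad f(X)\to+\infty \text{ as } X\to+\infty.
\]
By the intermediate value theorem, one root lies in $(0,\min\{s,d\})$ and the other lies in $(\max\{s,d\},\infty)$. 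This is exactly \eqref{root-location}; in particular $0<x_-<x_+$. Taking square roots, the five equilibrium speeds $-c_+<-c_-<0<c_-<c_+$ are real and pairwise distinct.

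\textbf{Step 3: a neighborhood.} Near $\bar U$ we have $v>0$, $\theta>0$ and $\cC>0$. There the entries $p_v,p_\theta,p_q,B,C,D$ are continuous in $U$, because $a\in C^1$ near $1$. Hence the coefficients of the characteristic polynomial depend continuously on $U$. Roots of a polynomial depend continuously on its coefficients, and at $\bar U$ there are five simple real roots. A complex-conjugate pair would have to coalesce first, so for $U$ close to $\bar U$ the five roots stay real and distinct. The zero eigenvalue persists exactly, by the block structure \eqref{blockA}. This gives strict hyperbolicity in a neighborhood of $\bar U$.

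\textbf{Main obstacle.} There is no real obstacle here. The one point needing care is the determinant expansion in Step 1, including the check that no spurious root $\lambda=0$ enters the quartic.
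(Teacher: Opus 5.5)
Your proposal is correct and follows the paper's proof essentially step for step: the same quartic $\lambda^4-(s+d+h)\lambda^2+sd$, the same discriminant identity, the same sign values $f(0)=sd$, $f(s)=-hs$, $f(d)=-hd$ to locate the roots, and the zero speed taken from the block structure. Your Step 3 just writes out the root-continuity argument that the paper states in one line ("smooth separation persists in a neighborhood of equilibrium").
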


\begin{proof}
The characteristic polynomial of $A_4(\bar U)$ is
\[
\lambda^4-(s+d+h)\lambda^2+sd.
\]
Thus $x=\lambda^2$ solves
\[
f(x)=x^2-(s+d+h)x+sd=0.
\]
Its discriminant is
\[
(s+d+h)^2-4sd=(s-d)^2+2h(s+d)+h^2>0.
\]
Both roots are positive since their sum and product are positive. Furthermore,
\[
f(0)=sd>0,\quad f(s)=-hs<0,\quad f(d)=-hd<0,
\]
which gives \eqref{root-location}. The fifth eigenvalue is identically zero by \eqref{blockA}. Smooth separation persists in a neighborhood of equilibrium.
\end{proof}

For a nonzero equilibrium eigenvalue $\lambda=\pm\sqrt{x}$, $x\in\{x_-,x_+\}$, normalize the right eigenvector by $r_v=1$. One finds
\begin{equation}\label{right-eig}
r_\lambda=
\left(
1,-\lambda,\frac{x-s}{\alpha},
\frac{dC_v(x-s)}{\alpha\lambda},0
\right)^T.
\end{equation}
The last component is zero: propagating fluid--thermal waves carry no principal $F$ component.

Define
\begin{equation}\label{Ndef}
\cN(x)=a''(1)(x-d)-d(x-s)+\frac{3d-x}{\alpha}(x-s)^2
\end{equation}
and
\begin{equation*}
\DGN:=\cN(x_-)^2+\cN(x_+)^2.
\end{equation*}

\begin{proposition}[Explicit genuine-nonlinearity coefficient]\label{prop:GN}
Let $\lambda=\pm\sqrt{x}$ with $x=x_\pm$. Then
\begin{equation}\label{GNformula}
\nabla\lambda(\bar U)\cdot r_\lambda(\bar U)
=-\frac{\cN(x)}{2\lambda[2x-(s+d+h)]},
\end{equation}
where $\cN$ is defined in \eqref{Ndef}. Consequently, the pair of families with speed $\pm\sqrt{x}$ is genuinely nonlinear at equilibrium if and only if $\cN(x)\ne0$.
\end{proposition}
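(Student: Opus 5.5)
The plan is to treat $\lambda(U)$ as a simple root of the characteristic polynomial of the fluid--thermal block. Implicit differentiation then gives $\nabla\lambda\cdot r_\lambda$ without computing the full eigen-decomposition.

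\textbf{Reduction to the $4\times4$ block.} By \eqref{blockA}, $A(U)$ does not depend on $F$. By \eqref{right-eig}, $r_\lambda$ has zero $F$-component. So only $W=(v,u,\theta,q)$ and $A_4(W)$ matter.

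\textbf{Implicit differentiation.} Set $P(\mu,W)=\det(\mu I-A_4(W))$. Since $\lambda$ is a simple root near $\bar U$ (Proposition~\ref{prop:hyper}), differentiating $P(\lambda(W),W)=0$ in the direction $r=r_\lambda$ gives
\[
\nabla\lambda\cdot r=-\frac{D_WP(\lambda,\bar W)[r]}{\partial_\mu P(\lambda,\bar W)}.
\]
At equilibrium $P=\mu^4-(s+d+h)\mu^2+sd$. Hence
\[
\partial_\mu P=4\lambda^3-2(s+d+h)\lambda=2\lambda\,[2x-(s+d+h)],
\]
which is exactly the denominator in \eqref{GNformula}.

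For the numerator, use Jacobi's formula: $D_WP[r]=-\operatorname{tr}\bigl(\operatorname{adj}(\lambda I-A_4)\,DA_4[r]\bigr)$. Because $\lambda$ is simple, $\operatorname{adj}(\lambda I-A_4)$ has rank one and equals $\partial_\mu P\,\, r\,l^T/(l r)$, with $l$ the left eigenvector. The two formulations are therefore equivalent, and I will use whichever makes the bookkeeping shorter. Concretely, I will compute $D_WP[r]$ by cofactor expansion of the sparse matrix $\lambda I-A_4$.

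\textbf{Computing $DA_4[r]$ at $\bar W$.} All $q$-corrections are quadratic in $q$ except those in $p_q$ and $C$. Differentiating the entries at $q=0$, $v=\theta=1$, the only nonzero contributions are:
\begin{itemize}
\item $\partial_v p_v=a''(1)$, entering entry $(2,1)$ with weight $r_v=1$;
\item $\partial_q p_q=-\tau/\kappa_0$, entering entry $(2,4)$ with weight $r_q$;
\item $\partial_\theta B=-\alpha/C_v$, entering entry $(3,2)$ with weight $r_\theta$;
\item $\partial_q C=-2/C_v$, entering entry $(3,3)$ with weight $r_q$.
\end{itemize}
The $v$-derivative of $B$ and all derivatives of $D$ vanish at $q=0$.

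Next, substitute the components from \eqref{right-eig}: $r_u=-\lambda$, $r_\theta=(x-s)/\alpha$, $r_q=dC_v(x-s)/(\alpha\lambda)$. This gives $D_WP[r]$ as a sum of four terms. Each term is a single entry of $DA_4[r]$ times the corresponding cofactor of $\lambda I-A_4$; all cofactors are low-degree polynomials in $\lambda$, $s$, $\alpha$, $C_v$, $\kappa_0/\tau$.

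\textbf{Main obstacle: the algebraic simplification.} The hard step is reducing the numerator to exactly $\cN(x)$. I plan to:
\begin{itemize}
\item reduce all even powers of $\lambda$ using $x^2=(s+d+h)x-sd$;
\item rewrite $h=\alpha^2/C_v$ via the eigenvector relation $(x-s)(x-d)=hx$, which follows from $f(x)=0$;
\item convert factors $\kappa_0/\tau=dC_v$.
\end{itemize}
The odd powers of $\lambda$ should cancel against the $1/\lambda$ in $r_q$. The $a''(1)$ term should produce $a''(1)(x-d)$, since its cofactor is $\lambda^2-d$ times a constant. The $\partial_\theta B$ term should produce $-d(x-s)$. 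The two $q$-terms should combine into $\frac{3d-x}{\alpha}(x-s)^2$. This last combination is where I most expect sign or factor errors, so I would check it against a direct left-eigenvector computation of $l\,DA_4[r]\,r/(l r)$.

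\textbf{Consequence.} $\lambda\neq0$, and $2x-(s+d+h)=\pm\sqrt{(s+d+h)^2-4sd}\neq0$ by the positive discriminant in Proposition~\ref{prop:hyper}. So the denominator in \eqref{GNformula} never vanishes. Genuine nonlinearity of the family with speed $\pm\sqrt{x}$ at $\bar U$ is therefore equivalent to $\cN(x)\ne0$. The formula depends on $\lambda$ only through the odd prefactor, so both signs $\pm\sqrt{x}$ behave simultaneously.
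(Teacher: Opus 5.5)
Your proposal follows the paper's proof: implicit differentiation of the $4\times4$ characteristic polynomial, with denominator $\partial_\mu P=2\lambda[2x-(s+d+h)]$ and the numerator reduced to $\cN(x)$ via $(x-s)(x-d)=hx$. Your list of nonzero entries of $DA_4[r]$ is correct, and summing their contributions gives exactly $\cN(x)$, but your predicted grouping is off: the $-d(x-s)$ term comes from the $\partial_q p_q$ entry (contribution $-\frac{\alpha\lambda}{C_v}r_q=-d(x-s)$), whereas the $\partial_\theta B$ entry gives $-\frac{hx(x-s)}{\alpha}=-\frac{(x-d)(x-s)^2}{\alpha}$, which combines with the $\partial_q C$ contribution $\frac{2d(x-s)^2}{\alpha}$ to produce $\frac{3d-x}{\alpha}(x-s)^2$.
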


\begin{proof}
At $q=0$, the characteristic polynomial of the $4\times4$ block is
\begin{equation*}
\mathscr P(\lambda;v,\theta,0)
=\lambda^4+\left(a'(v)-d-\frac{\alpha^2}{C_v}\theta\right)\lambda^2-a'(v)d.
\end{equation*}
Although the eigenvalue is evaluated at $q=0$, the $q$-derivative cannot be omitted because the right eigenvector has a nonzero $q$ component. A direct differentiation of the full determinant gives, at equilibrium,
\begin{align*}
\mathscr P_\lambda&=2\lambda[2x-(s+d+h)],\\
\mathscr P_v&=a''(1)(x-d),\\
\mathscr P_\theta&=-hx,\\
\mathscr P_q&=\frac{\lambda}{C_v}[2(x-s)-\alpha].
\end{align*}
By implicit differentiation,
\[
\lambda_z=-\frac{\mathscr P_z}{\mathscr P_\lambda}.
\]
Using \eqref{right-eig} and
\begin{equation*}
(x-s)(x-d)=hx,
\end{equation*}
which follows from $f(x)=0$, one obtains
\begin{align*}
\mathscr P_\lambda(\nabla\lambda\cdot r_\lambda)
={}&-a''(1)(x-d)
+\frac{hx(x-s)}{\alpha}
-\frac{d(x-s)}{\alpha}[2(x-s)-\alpha]\\
={}&-\left[a''(1)(x-d)-d(x-s)
+\frac{3d-x}{\alpha}(x-s)^2\right].
\end{align*}
This is \eqref{GNformula}.
\end{proof}

Consequently, condition \eqref{GN-assump} is equivalent to
\begin{equation*}
\DGN>0.
\end{equation*}
The next statement identifies explicitly the exceptional parameter set on which this nondegeneracy fails.  For this purpose set
\begin{equation}\label{Qdef}
\cQ=\alpha(s-d)+2(s-d)^2+h(s+d-h)
\end{equation}
and
\begin{equation}\label{a2star}
a_{2,*}=s+\frac{s\,[2(s-d)+h]}{\alpha}.
\end{equation}

\begin{proposition}[Generic nondegeneracy]\label{prop:generic}
Both propagating pairs fail to be genuinely nonlinear at equilibrium if and only if
\begin{equation*}
\cQ=0,
\qquad
a''(1)=a_{2,*},
\end{equation*}
with \eqref{Qdef}--\eqref{a2star}. In particular, if $\cQ\ne0$, at least one propagating pair is genuinely nonlinear for every value of $a''(1)$. Moreover, if
\[
s\ge d,\qquad h<s+d,
\]
then $\cQ>0$, so at least one propagating pair is genuinely nonlinear independently of $a''(1)$.
\end{proposition}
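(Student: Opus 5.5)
The plan is to regard $\cN$ as a polynomial in $x$ whose coefficients are affine in $a_2:=a''(1)$, and to reduce it modulo the characteristic quadratic $f(x)=x^2-\sigma x+sd$, where $\sigma:=s+d+h$.

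By Proposition~\ref{prop:GN}, both propagating pairs fail to be genuinely nonlinear exactly when $\cN(x_+)=\cN(x_-)=0$. The denominator $2\lambda[2x-\sigma]$ in \eqref{GNformula} is nonzero, since $2x_\pm-\sigma=\pm\sqrt{\sigma^2-4sd}\ne0$. By Proposition~\ref{prop:hyper}, $x_+\neq x_-$ are the two simple roots of $f$. So a polynomial vanishes at both of them if and only if it is divisible by $f$.

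\textbf{Step 1 (remainder).} Write $\cN(x)\equiv R(x)=c_1x+c_0 \pmod f$. Then the degeneracy of both pairs is equivalent to $c_1=c_0=0$. To compute $R$, expand
\[
\cN(x)=a_2(x-d)-d(x-s)+\alpha^{-1}(3d-x)(x-s)^2,
\]
which is a cubic in $x$. Then substitute the reductions
\[
x^2\equiv\sigma x-sd,\qquad x^3\equiv(\sigma^2-sd)x-\sigma sd.
\]
The coefficients come out as $c_1=a_2+\beta_1$ and $c_0=-d\,a_2+\beta_0$. Here $\beta_0,\beta_1$ are explicit rational expressions in $s,d,h,\alpha$ and do not involve $a_2$.

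\textbf{Step 2 (elimination).} The linear system $c_1=c_0=0$ in the single unknown $a_2$ is solvable if and only if $\beta_0+d\beta_1=0$. In that case the solution is $a_2=-\beta_1$. The goal is to check two things:
\begin{itemize}
\item $\alpha(\beta_0+d\beta_1)$ equals a nonzero multiple of $\cQ$ from \eqref{Qdef}, possibly after factoring out a power of $s$ or $d$;
\item $-\beta_1=a_{2,*}$ from \eqref{a2star}, at least after using $\cQ=0$ if needed.
\end{itemize}
In this bookkeeping, $h=\alpha^2/C_v$ should be treated as independent of $\alpha$. Only the relation $(x-s)(x-d)=hx$, which is equivalent to $f(x)=0$, is used.

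As a cross-check, there is a second route that avoids the division. Since $x_-<d<x_+$ by \eqref{root-location}, each equation $\cN(x_\pm)=0$ can be solved for $a_2$. Using $1/(x-d)=(x-s)/(hx)$, the solved value is
\[
a_2=\frac{(x-s)^2}{hx}\Bigl[d-\frac{(3d-x)(x-s)}{\alpha}\Bigr].
\]
Equating this expression at $x_+$ and $x_-$ must reproduce $\cQ=0$. This route also confirms that for $\cQ\ne0$ no single value of $a_2$ kills both $\cN(x_\pm)$, which is the "for every value of $a''(1)$" claim.

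\textbf{Step 3 (sign).} If $s\ge d$ and $h<s+d$, then every term of $\cQ$ is controlled:
\[
\alpha(s-d)\ge0,\qquad 2(s-d)^2\ge0,\qquad h(s+d-h)>0.
\]
Hence $\cQ>0$, and the first part of the proposition shows that at least one pair is genuinely nonlinear whatever $a''(1)$ is.

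The main obstacle is the algebra in Steps 1–2. The cubic reduction produces many terms in $\sigma$, $sd$ and $\alpha^{-1}$, and $\beta_0+d\beta_1$ must collapse exactly to a multiple of $\cQ$. I would organize it in powers of $\alpha^{-1}$. The $\alpha^0$ part comes from $-d(x-s)$ and the $a_2$ terms. The $\alpha^{-1}$ part comes from the cubic, and I would rewrite it in terms of $y:=x-s$, which satisfies $y^2\equiv(d+h-s)y+hs$. I would then verify the identity by collecting terms, and double-check it numerically for one parameter choice.
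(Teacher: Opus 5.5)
Your proposal is correct and follows essentially the same route as the paper: reduce $\cN$ (the paper reduces $\alpha\cN$) modulo $f$ to an affine remainder, use $x_-\ne x_+$ to force both remainder coefficients to vanish, and then apply the same termwise sign argument for $\cQ$. Your hedges do resolve: carrying out the reduction gives $\alpha(\beta_0+d\beta_1)=d\,\cQ$ and $-\beta_1=a_{2,*}-\cQ/\alpha$, whereas the paper reaches $a''(1)=a_{2,*}$ directly by solving the constant-term equation first and then substituting into the $x$-coefficient.
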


\begin{proof}
Reduce $\alpha\cN(x)$ modulo the quadratic relation $f(x)=0$. The remainder is affine:
\begin{equation*}
\alpha\cN(x)=A_1x+A_0\qquad\text{for }f(x)=0,
\end{equation*}
where
\begin{align*}
A_1&=\alpha a''(1)-\alpha d+2d^2+dh-2ds-h^2,\\
A_0&=d[-\alpha a''(1)+\alpha s-2ds+hs+2s^2].
\end{align*}
Because $x_-\ne x_+$, the two equations $\cN(x_-)=\cN(x_+)=0$ hold if and only if $A_1=A_0=0$. The equation $A_0=0$ gives
\[
a''(1)=s+\frac{s[2(s-d)+h]}{\alpha}=a_{2,*}.
\]
Substitution into $A_1=0$ yields exactly
\[
\alpha(s-d)+2(s-d)^2+h(s+d-h)=0.
\]
Finally, if $s\ge d$ and $h<s+d$, then each term in
\[
\cQ=\alpha(s-d)+2(s-d)^2+h(s+d-h)
\]
is nonnegative and the last term is strictly positive.
\end{proof}

\medskip
\noindent\textbf{A useful algebraic example.}\label{sec:algebraic-check}

For reference, the equilibrium characteristic roots satisfy
\[
x_-x_+=sd,
\qquad x_-+x_+=s+d+h,
\qquad (x-s)(x-d)=hx.
\]
The denominator in \eqref{GNformula} is never zero because
\[
2x_\pm-(s+d+h)=\pm\sqrt{(s+d+h)^2-4sd}.
\]
Thus all loss of genuine nonlinearity is encoded by the numerator $\cN(x_\pm)$.

As a simple example, take
\[
a(v)=v^{-1}-1,
\qquad C_v=\alpha=\kappa_0=\tau=1.
\]
Then
\[
s=d=h=1,
\qquad a''(1)=2,
\qquad x_\pm=\frac{3\pm\sqrt5}{2},
\]
and
\[
\cQ=1>0.
\]
A direct substitution gives
\[
\cN(x_-)=\frac{3-\sqrt5}{2}>0,
\qquad
\cN(x_+)=\frac{3+\sqrt5}{2}>0,
\]
so all four nonzero characteristic families are genuinely nonlinear at equilibrium.

The structural analysis needed for the blow-up construction is now complete. Under $a'(1)<0$ and assumption~\eqref{GN-assump}, we may fix one genuinely nonlinear family $p$ and a sufficiently small neighborhood of $\bar U$ on which the characteristic speeds remain separated and the sign of $\nabla\lambda_p\cdot r_p$ is unchanged. The remaining task is therefore dynamical: construct small physical data on the Poisson constraint manifold that are compressive in the $p$-family, and show that the constraint-generated modes remain lower order until the principal derivative blows up.

\subsection{Local theory and Poisson-compatible compressive data}\label{sec:lwp}\label{sec:compression}

The local theory is used here only as a framework for the singularity argument.  It has two roles: it supplies a classical solution on which the characteristic decomposition is legitimate, and it identifies the only possible breakdown mechanism once the state is known to remain in a compact strictly hyperbolic region.  The actual singularity formation will come from the compatible compressive data constructed below, not from the Sobolev estimate itself.

Let $\mathscr D$ denote the physical region in which
\begin{equation*}
v>0,\qquad \theta>0,\qquad \cC>0,
\end{equation*}
and the five eigenvalues of $A(U)$ are real and simple.  Fix a small neighborhood $\OO\Subset\mathscr D$ of $\bar U$.

\begin{proposition}[Local theory and continuation]\label{prop:LWP}
Assume $a'(1)<0$, let $m\ge3$, and let $a\in C^{m+1}$ on the relevant $v$-interval.  After shrinking $\OO$ if necessary, the balance law \eqref{local-system} is symmetrizable hyperbolic on $\OO$.  More precisely, there is a $C^m$ symmetric positive definite matrix $\mathsf S(U)$ such that $\mathsf S(U)A(U)$ is symmetric.  If
\[
U_0-\bar U\in H^m(\R),\qquad U_0(x)\in K_0\Subset\OO,
\]
then \eqref{local-system} has a unique solution
\begin{equation}\label{LWP-space}
U-\bar U\in C([0,T];H^m)\cap C^1([0,T];H^{m-1})
\end{equation}
for some $T>0$.  If $F_{0x}=1-v_0$, the Poisson constraint persists, and compact perturbations have finite propagation speed.

If the range of a smooth solution is contained in a compact set $K\Subset\OO$, then its symmetrized $H^m$ energy satisfies
\begin{equation}\label{Hm-energy}
\frac{\dd}{\dd t}\mathcal E_m(t)
\le C_K\bigl(1+\|U_x(t)\|_{L^\infty}\bigr)\mathcal E_m(t),
\end{equation}
where
\begin{equation*}
\mathcal E_m(t)
=\sum_{j=0}^m\int_\R
(\partial_x^j(U-\bar U))^T \mathsf S(U)\partial_x^j(U-\bar U)\,\dd x.
\end{equation*}
Consequently, if $T_{\max}<\infty$ while the state remains in some compact set $K\Subset\OO$, then
\begin{equation}\label{continuation-blow}
\int_0^{T_{\max}}\|U_x(t)\|_{L^\infty}\,\dd t=+\infty.
\end{equation}
\end{proposition}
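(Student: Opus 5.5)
The plan is to first produce the symmetrizer, then appeal to standard Kato--Friedrichs theory, and finally derive the continuation criterion from the energy inequality.

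\textbf{Symmetrizer.} I would build $\mathsf S$ from the eigenstructure. By Proposition~\ref{prop:hyper}, $A(U)$ has five real simple eigenvalues on $\OO$ once $\OO$ is small enough. Its left eigenvectors $l_i(U)$ are then $C^m$ functions of $U$, because $A$ is $C^m$ when $a\in C^{m+1}$: the entry $p_v$ contains $a'$. I would set $L(U)$ to be the matrix with rows $l_i(U)$ and define
\[
\mathsf S(U)=L(U)^TL(U).
\]
This matrix is symmetric positive definite. Since $LAL^{-1}=\Lambda$ is diagonal, $\mathsf SA=L^T\Lambda L$ is symmetric. The zero-speed Poisson row can simply be taken as $e_5^T$ thanks to the block structure \eqref{blockA}. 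For this reason the $F$-component is symmetrized trivially.

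\textbf{Existence, constraint and finite speed.} Local existence and uniqueness in the class \eqref{LWP-space} then follow from the standard Kato/Friedrichs theory for symmetrizable hyperbolic systems with smooth source $G$. Here $G(\bar U)=0$ and $G$ is $C^m$ near $\bar U$, so $G(U)-G(\bar U)\in H^m$. The initial range must lie in $K_0\Subset\OO$, and $m\ge3>3/2$ gives $H^m\subset C^1$. Persistence of the constraint is exactly Lemma~\ref{lem:constraint}, applied to the $C^1$ solution. Finite propagation speed would come from the usual local energy estimate on backward cones, with slope given by the maximal $|\lambda_i|$ on $\OO$. This shows that compact perturbations stay compactly supported.

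\textbf{Energy inequality \eqref{Hm-energy}.} I would apply $\partial_x^j$ to \eqref{local-system}, writing $V=U-\bar U$, and pair the result with $\mathsf S(U)\partial_x^jV$. The principal term integrates by parts to
\[
\tfrac12\int(\mathsf SA)_x|\partial_x^jV|^2.
\]
The time derivative produces
\[
\tfrac12\int\partial_t\mathsf S\,|\partial_x^jV|^2,
\]
where $\partial_t\mathsf S=\nabla\mathsf S\cdot U_t$ and $U_t=-AU_x+G$. Both pieces are bounded by $C_K(1+\|U_x\|_{L^\infty})\mathcal E_m$. The commutators $[\partial_x^j,A(U)]U_x$ are handled by the Moser-type estimate
\[
\|[\partial_x^j,A(U)]U_x\|_{L^2}\le C_K\|U_x\|_{L^\infty}\|\partial_x^jV\|_{L^2},
\]
using the $C^m$ regularity of $A$. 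The source term satisfies
\[
\|\partial_x^jG(U)\|_{L^2}\le C_K\|V\|_{H^j},
\]
by the Moser estimate applied to $G(U)-G(\bar U)$. Finally, uniform positivity of $\mathsf S$ on $K$ makes $\mathcal E_m$ equivalent to $\|V\|_{H^m}^2$.

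\textbf{Continuation.} Gronwall's inequality gives
\[
\mathcal E_m(t)\le \mathcal E_m(0)\exp\Bigl(C_K\int_0^t(1+\|U_x\|_{L^\infty})\Bigr).
\]
Suppose the integral in \eqref{continuation-blow} were finite while the range stays in $K$. Then $\|V\|_{H^m}$ would remain bounded up to $T_{\max}$. The local existence time depends only on the $H^m$ bound and on the distance of $K$ from $\partial\OO$, so the solution could be extended past $T_{\max}$, a contradiction.

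\textbf{Main obstacle.} The main technical point is regularity bookkeeping: $a\in C^{m+1}$ yields only $A\in C^m$, which is exactly what the commutator and Moser estimates at order $m$ require. The eigenvector-based symmetrizer is only $C^m$ in $U$, and I must check that this suffices for the $\partial_t\mathsf S$ and $(\mathsf SA)_x$ terms. It does, since only first derivatives of $\mathsf S$ appear.
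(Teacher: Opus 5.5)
Your proposal is correct and follows the paper's proof essentially step by step. The shared steps are:
\begin{itemize}
\item the eigenvector symmetrizer $\mathsf S=\mathsf L^T\mathsf L$ (you take $\mathsf L$ with rows the left eigenvectors; the paper inverts the right-eigenvector matrix, which is equivalent);
\item Kato's symmetric-hyperbolic theory for existence, together with Lemma~\ref{lem:constraint} for propagation of the Poisson constraint;
\item the differentiated energy estimate with the same $\partial_t\mathsf S$, $(\mathsf SA)_x$, commutator and Moser bounds;
\item Gronwall's inequality plus restarting the local theory to obtain \eqref{continuation-blow}.
\end{itemize}
Your extra remarks on finite propagation via local energy estimates on cones and on the $C^m$ regularity bookkeeping are consistent with the paper's argument and somewhat more explicit.
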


\begin{proof}
Strict hyperbolicity on a sufficiently small contractible neighborhood of $\bar U$ yields a $C^m$ eigenvector matrix $\mathsf R(U)$ satisfying
\[
A(U)\mathsf R(U)=\mathsf R(U)\Lambda(U),
\]
with $\Lambda$ real diagonal.  Put $\mathsf L=\mathsf R^{-1}$ and
\[
\mathsf S(U)=\mathsf L(U)^T\mathsf L(U).
\]
Then $\mathsf S$ is symmetric positive definite and $\mathsf S A=\mathsf L^T\Lambda\mathsf L$ is symmetric.  On every compact $K\Subset\OO$,
\begin{equation}\label{symm-equivalence}
c_K|Z|^2\le Z^T\mathsf S(U)Z\le C_K|Z|^2.
\end{equation}
The quasilinear symmetric-hyperbolic theory therefore yields \eqref{LWP-space}; see \cite{Kato1975}.  Lemma~\ref{lem:constraint} propagates the Poisson constraint.  Since $G(\bar U)=0$, compact perturbations have finite propagation relative to the equilibrium state.

For completeness, write $Z=U-\bar U$ and $Z_j=\partial_x^jZ$.  Differentiating
\[
Z_t+A(U)Z_x=G(U)
\]
gives
\[
\partial_tZ_j+A(U)\partial_xZ_j
=\partial_x^jG(U)-[\partial_x^j,A(U)]Z_x.
\]
Multiplication by $Z_j^T\mathsf S(U)$ and integration use the symmetry of $\mathsf S A$ to give
\[
-\int_\R Z_j^T\mathsf S A\,\partial_xZ_j\,\dd x
=\frac12\int_\R Z_j^T\partial_x(\mathsf S A)Z_j\,\dd x.
\]
Because $\mathsf S=\mathsf S(U)$ and $U_t=-A(U)U_x+G(U)$,
\[
\|\mathsf S_t\|_{L^\infty}\le C_K(1+\|U_x\|_{L^\infty}).
\]
The one-dimensional Moser and commutator estimates give
\[
\|G(U)\|_{H^m}\le C_K\|Z\|_{H^m},\qquad
\|[\partial_x^j,A(U)]Z_x\|_{L^2}
\le C_K\|U_x\|_{L^\infty}\|Z\|_{H^m}.
\]
Summing over $0\le j\le m$ and using \eqref{symm-equivalence} proves \eqref{Hm-energy}.  Hence
\begin{equation*}
\|U(t)-\bar U\|_{H^m}^2
\le C_K\|U_0-\bar U\|_{H^m}^2
\exp\!\left(C_Kt+C_K\int_0^t\|U_x(s)\|_{L^\infty}\,\dd s\right).
\end{equation*}
If the integral of $\|U_x\|_{L^\infty}$ stays finite and the state remains in $K$, the $H^m$ norm stays uniformly bounded.  Restarting the local theory at times $t_n\uparrow T_{\max}$ then gives a uniform positive extension time, contradicting maximality.  This proves \eqref{continuation-blow}.
\end{proof}

\begin{remark}[Breakdown alternatives]\label{rem:breakdown-alternatives}
The continuation statement above is deliberately local: it is used only while the solution stays in the fixed strictly hyperbolic neighborhood $\OO$.  The solution constructed below remains in an $O(\eps)$ neighborhood of $\bar U$ up to its classical lifespan, hence in a compact set $K\Subset\OO$ once $\eps$ is sufficiently small.  In particular, $v$ and $\theta$ stay positive, $\cC$ stays bounded away from zero, and characteristic speeds do not collide.  Proposition~\ref{prop:LWP} therefore reduces finite-time breakdown for the constructed solution to growth of first derivatives; no global continuation criterion on all of $\mathscr D$ is needed.
\end{remark}

\medskip
\noindent\textbf{Why an unconstrained simple wave is not admissible.}
For an unconstrained strictly hyperbolic balance law, a natural starting point is a pure $p$-wave parametrized by an integral curve of $r_p$, with all transversal characteristic modes initially zero.  Here that construction is not physical.  Indeed, if the fifth component were kept identically zero, then $F_{0x}=0$, whereas the Poisson constraint requires
\[
F_{0x}=1-v_0.
\]
Except for the trivial state $v_0\equiv1$, a pure four-dimensional simple wave therefore lies outside the Poisson constraint manifold.  Moreover, if one reconstructs $F_0$ from the constraint, compact support at both spatial infinities requires the neutrality condition
\begin{equation*}
\int_\R(v_0-1)\,\dd x=0.
\end{equation*}
The point of the next construction is to impose both conditions exactly while perturbing the principal $p$-wave only at quadratic order in its amplitude.  This is the model-specific step that allows the unconstrained characteristic blow-up mechanism to survive the Poisson coupling.

\medskip
\noindent\textbf{Poisson-compatible compression on the constraint manifold.}

Fix a genuinely nonlinear nonzero family $p$.  Starting from the normalization used in Proposition~\ref{prop:GN}, flip the common sign of $r_p,l_p$ if necessary so that
\begin{equation}\label{orientation}
\gamma_*:=-\nabla\lambda_p(\bar U)\cdot r_p(\bar U)>0.
\end{equation}
After this sign choice set
\begin{equation*}
\sigma_p:=(r_p)_v(\bar U)\in\{-1,1\}.
\end{equation*}
Thus we do \emph{not} impose the incompatible additional normalization $(r_p)_v(\bar U)=1$ after orienting the field.  Let $W=(v,u,\theta,q)$ denote the four fluid--thermal variables. Because $A$ is independent of $F$ in its principal block, let $\Psi_p(\sigma)$ be the integral curve of the four-dimensional right eigenvector:
\begin{equation*}
\Psi_p'(\sigma)=\widehat r_p(\Psi_p(\sigma)),
\qquad \Psi_p(0)=(1,0,1,0).
\end{equation*}
Then the $v$ component $V_p$ satisfies
\begin{equation}\label{Vp-expand}
V_p(\sigma)=1+\sigma_p\sigma+O(\sigma^2).
\end{equation}

Choose $\alpha_0,\psi\in C_c^\infty((-1/2,1/2))$ such that
\begin{equation}\label{profiles}
\int_\R\alpha_0(y)\,\dd y=0,
\qquad
\int_\R\psi(y)\,\dd y\ne0,
\qquad
A_*:=\max_y\alpha_0'(y)>0.
\end{equation}

\begin{lemma}[Neutral simple-wave correction]\label{lem:neutral-correction}
For all sufficiently small $\eps>0$, there exists $c_\eps=O(\eps^2)$ such that
\begin{equation*}
h_\eps(y)=\eps\alpha_0(y)+c_\eps\psi(y)
\end{equation*}
satisfies
\begin{equation}\label{neutral-y}
\int_\R[V_p(h_\eps(y))-1]~\dd y=0.
\end{equation}
\end{lemma}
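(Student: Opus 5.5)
The plan is to apply the implicit function theorem to the map
\[
\Phi(\eps,c):=\int_\R\bigl[V_p(\eps\alpha_0(y)+c\,\psi(y))-1\bigr]\,\dd y ,
\]
near $(\eps,c)=(0,0)$. We will then check the size of the solution $c_\eps$ by Taylor expansion.

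First we record regularity. The eigenvector field $\widehat r_p$ is $C^2$ near $(1,0,1,0)$ when $a\in C^4$; in fact it is at least as smooth as $A_4$, hence $C^3$ in $v$, using simplicity of $\lambda_p$ from Proposition~\ref{prop:hyper}. So the integral curve $\Psi_p$ is $C^3$ on a small interval $|\sigma|<\sigma_0$, and $V_p\in C^2$ there. Since $\alpha_0,\psi$ are bounded and supported in $(-1/2,1/2)$, for $|\eps|,|c|$ small the argument $h=\eps\alpha_0+c\psi$ stays in $(-\sigma_0,\sigma_0)$. The integrand is then supported in a fixed compact set, and $\Phi$ is $C^2$ by differentiation under the integral. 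Clearly $\Phi(0,0)=0$.

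Next we compute the derivatives. Using \eqref{Vp-expand}, $V_p'(0)=\sigma_p\in\{\pm1\}$, so
\[
\partial_c\Phi(0,0)=\sigma_p\int_\R\psi\,\dd y\neq0
\]
by \eqref{profiles}. The implicit function theorem therefore gives a $C^2$ function $\eps\mapsto c_\eps$ with $c_0=0$ and $\Phi(\eps,c_\eps)=0$ for small $\eps$; this is \eqref{neutral-y}. Moreover,
\[
\partial_\eps\Phi(0,0)=\sigma_p\int_\R\alpha_0\,\dd y=0,
\]
again by \eqref{profiles}. Hence $c_\eps'(0)=-\partial_\eps\Phi/\partial_c\Phi=0$, and Taylor's theorem yields $c_\eps=O(\eps^2)$.

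As a cross-check on the order, one can write directly
\[
V_p(h)-1=\sigma_p h+R(h),\qquad |R(h)|\le C h^2 .
\]
Then
\[
\sigma_p c\int_\R\psi\,\dd y=-\int_\R R(\eps\alpha_0+c\psi)\,\dd y=O(\eps^2+c^2),
\]
which forces $|c_\eps|\le C\eps^2$ once $c_\eps$ is known to be small.

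There is no real obstacle here. The only point needing care is confirming enough smoothness of $V_p$ (at least $C^1$ in $\sigma$, with a $C^2$ bound for the quadratic remainder) from the regularity of $a$. That follows from the smooth dependence of simple eigenvectors on the matrix entries and from standard ODE theory.
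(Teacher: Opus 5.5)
Your proposal is correct and is essentially the paper's argument: both rely on the implicit function theorem, with nondegeneracy from $\partial_c = \sigma_p\int_\R\psi\,\dd y \neq 0$ and the $O(\eps^2)$ size coming from $\int_\R\alpha_0\,\dd y = 0$. The only difference is bookkeeping. The paper solves in $c$ for each fixed $\eps$, using $\mathcal M_\eps(0)=O(\eps^2)$ and $\partial_c\mathcal M_\eps(0)=\sigma_p\int_\R\psi\,\dd y+O(\eps)$, whereas you solve jointly in $(\eps,c)$ and read off $c_\eps=O(\eps^2)$ from $c_\eps'(0)=0$ (or from your direct expansion).
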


\begin{proof}
Define
\[
\mathcal M_\eps(c)=\int_\R[V_p(\eps\alpha_0+c\psi)-1]\,\dd y.
\]
By \eqref{Vp-expand} and $\int\alpha_0=0$, $\mathcal M_\eps(0)=O(\eps^2)$, whereas
\[
\partial_c\mathcal M_\eps(0)=\int_\R V_p'(\eps\alpha_0(y))\psi(y)\,\dd y
=\sigma_p\int_\R\psi(y)\,\dd y+O(\eps),
\]
which is nonzero for small $\eps$. The implicit function theorem gives $c_\eps=O(\eps^2)$.
\end{proof}

Introduce a second small parameter $0<\eta\ll1$ and the narrow length scale
\begin{equation*}
\ell=\eps\eta.
\end{equation*}
In scaled variables
\[
y=x/\ell,\qquad s=t/\ell,
\]
the balance law becomes
\begin{equation}\label{scaled-system}
U_s+A(U)U_y=\ell G(U)=\eps\eta G(U),
\end{equation}
and the Poisson constraint becomes
\begin{equation*}
F_y=\eps\eta(1-v).
\end{equation*}
Define scaled initial data by
\begin{equation}\label{scaled-data}
W_0(y)=\Psi_p(h_\eps(y)),
\qquad
F_0(y)=\eps\eta\int_{-\infty}^y[1-v_0(z)]\,\dd z.
\end{equation}
Because of \eqref{neutral-y}, $F_0$ is compactly supported whenever $v_0-1$ is.

Let $l_i(U),r_i(U)$ be $C^3$ dual left and right eigenvectors on the chosen neighborhood and define characteristic derivative amplitudes
\begin{equation*}
w_i=l_i(U)U_y.
\end{equation*}
The zero-speed mode can be normalized by
\[
r_0=e_5,\qquad l_0=e_5^T.
\]

\begin{lemma}[Size of the initial characteristic modes]\label{lem:initial-modes}
The data \eqref{scaled-data} satisfy
\begin{align}
w_p(0,y)&=h_\eps'(y)=\eps\alpha_0'(y)+O(\eps^2),\label{wp0}\\
w_j(0,y)&=0,\qquad j\ne p,0,\label{wj0}\\
w_0(0,y)&=F_{0y}=\eps\eta(1-v_0)=O(\eps^2\eta).\label{w00}
\end{align}
Moreover,
\begin{equation*}
\|U_0-\bar U\|_{L^\infty}=O(\eps),
\qquad
\|F_0\|_{L^\infty}=O(\eps^2\eta).
\end{equation*}
In physical variables, the amplitude is $O(\eps)$, the support width is $O(\eps\eta)$, and the dominant initial gradient is $O(\eta^{-1})$.
\end{lemma}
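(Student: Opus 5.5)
The proof is a direct computation from the definition \eqref{scaled-data}, the block structure \eqref{blockA}, and the duality $l_ir_j=\delta_{ij}$. The plan is to compute $U_{0y}$ explicitly, decompose it in the eigenbasis, and then read off the sizes.

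Since $W_0=\Psi_p(h_\eps)$ and $\Psi_p'=\widehat r_p(\Psi_p)$, the chain rule gives $W_{0y}=h_\eps'(y)\,\widehat r_p(W_0)$. Because $A$ is block diagonal with a zero fifth row and column, the right eigenvectors of the nonzero families can be taken with vanishing fifth component, as in \eqref{right-eig}. The full right eigenvector is therefore $r_p(U_0)=(\widehat r_p(W_0),0)^T$, and $r_0=e_5$. Hence
\[
U_{0y}=h_\eps'(y)\,r_p(U_0)+F_{0y}\,e_5 .
\]
Applying $l_i(U_0)$ and using $l_ir_p=\delta_{ip}$ and $l_ie_5=l_ir_0=\delta_{i0}$ gives the three identities \eqref{wp0}--\eqref{w00}. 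The first is $w_p=h_\eps'$, which equals $\eps\alpha_0'+c_\eps\psi'=\eps\alpha_0'+O(\eps^2)$ by Lemma~\ref{lem:neutral-correction}. The second is $w_j=0$ for $j\ne p,0$. The third is $w_0=F_{0y}$.

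One point needs care. We need $l_i(U)e_5=\delta_{i0}$ at every point, not only at equilibrium. This holds because the dual basis of $\{r_j\}$ with $r_0=e_5$ is uniquely determined, and $l_0=e_5^T$ is dual to the block-structured $r_j$ precisely because those $r_j$ have zero fifth component. With this normalization, \eqref{w00} follows from the scaled constraint $F_{0y}=\eps\eta(1-v_0)$. By \eqref{Vp-expand} we have $|v_0-1|=|V_p(h_\eps)-1|=O(\eps)$, so $w_0=O(\eps^2\eta)$.

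For the amplitude bounds, $\Psi_p$ is $C^1$ near $0$ and $\|h_\eps\|_{L^\infty}=O(\eps)$, so $\|W_0-\bar W\|_{L^\infty}=O(\eps)$. The fifth component is
\[
F_0(y)=\eps\eta\int_{-\infty}^y(1-v_0)\,\dd z .
\]
Its integrand is $O(\eps)$ and supported in $(-1/2,1/2)$, so $\|F_0\|_{L^\infty}=O(\eps^2\eta)$. By \eqref{neutral-y}, $F_0$ vanishes for $y>1/2$, so it is compactly supported.

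Translating to physical variables via $x=\ell y$ with $\ell=\eps\eta$: the support has width $O(\eps\eta)$ and the amplitude is unchanged at $O(\eps)$. The gradient becomes $\partial_x=\ell^{-1}\partial_y$, so the dominant gradient is $\eps\alpha_0'/(\eps\eta)=O(\eta^{-1})$. The only step needing attention is the normalization of the dual eigenvectors discussed above; everything else is routine.
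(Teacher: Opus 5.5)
Your proposal is correct and follows essentially the same route as the paper. The chain rule on the integral curve gives $W_{0y}=h_\eps'\,\widehat r_p(W_0)$, and duality with $r_0=e_5$ then yields \eqref{wp0}--\eqref{w00}; your observation that the nonzero-family eigenvectors have vanishing fifth component is what makes this duality hold pointwise. The $L^\infty$ sizes and the physical scalings follow from rescaling by $\ell=\eps\eta$. The paper's proof additionally records that the rescaled physical data satisfy $F^{\rm ph}_{0x}=1-v_0^{\rm ph}$ and $\int_\R(v_0^{\rm ph}-1)\,\dd x=0$, which is needed for \eqref{compat-main}; you could add this in one line.
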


\begin{proof}
Differentiating the integral curve gives
\[
(W_0)_y=h_\eps'\widehat r_p(W_0),
\]
which proves \eqref{wp0}--\eqref{wj0}. Equation \eqref{w00} follows from the constraint and $v_0-1=O(\eps)$. Rescaling $\partial_x=\ell^{-1}\partial_y$ gives the last statement.  Explicitly, the physical initial data are
\[
U_0^{\rm ph}(x)=U_0^{\rm sc}(x/\ell),
\]
so $F_{0x}^{\rm ph}=\ell^{-1}F_{0y}^{\rm sc}=1-v_0^{\rm ph}$ and
$\int_\R(v_0^{\rm ph}-1)\,\dd x=\ell\int_\R(v_0^{\rm sc}-1)\,\dd y=0$.
\end{proof}

The separation of scales is now explicit.  In the scaled variables the principal compressive mode has size $O(\eps)$, the constraint-generated zero mode has size only $O(\eps^2\eta)$, all other transversal modes vanish initially, and the source itself carries the small prefactor $\eps\eta$.  The next step is to show that this hierarchy persists on the natural steepening time scale $s=O(\eps^{-1})$: the non-$p$ modes must remain $O(\eps^2)$ while the $p$-mode is allowed to grow.  Once this is established, genuine nonlinearity reduces the final step to a scalar Riccati comparison.

\subsection{Characteristic bootstrap for the constrained wave}\label{sec:barlin}

We next propagate the scale separation obtained at the end of Subsection~\ref{sec:compression}.  If the fifth component were absent, the initial profile would be an exact $p$-simple wave and the John--H\"ormander--B\"arlin estimates would apply in their standard form.  The Poisson constraint changes only one feature of that picture: it inserts a zero-speed mode of size $O(\eps^2\eta)$.  The purpose of this subsection is to show that this extra mode, together with all other transversal waves generated by the source and by nonlinear interactions, stays at the $O(\eps^2)$ level on the steepening time scale $s=O(\eps^{-1})$.

There are four quantities to control.  The functional $J$ measures the $L^1$ strength of the principal $p$-wave inside its characteristic strip; $S$ measures the width of that strip; $M$ measures the total amplitude of the state; and $V$ measures every component that can contaminate the principal wave, namely the non-$p$ characteristic modes together with the part of the $p$-wave that has escaped the original strip.  The bootstrap will preserve
\begin{equation*}
J=O(\eps),\qquad S=O(1),\qquad M=O(\eps),\qquad V=O(\eps^2).
\end{equation*}
The last estimate is the decisive one: it keeps all transversal interactions one order below the compressive mode and is precisely where the Poisson compatibility correction has to be checked.

Let $\Omega$ be a sufficiently small compact neighborhood of $\bar U$, contained in the interior of $\OO$, on which the eigenvalues and dual eigenvectors are $C^3$ and the eigenvalues remain simple.  Fix the eigenvector normalizations $l_i r_j=\delta_{ij}$.  We record the characteristic identities explicitly, since the cancellation in the two-form equation is used repeatedly below.

Write $D A(U)[\zeta]$ for the directional derivative of $A$ in the direction $\zeta$.  Differentiating \eqref{scaled-system}, using $U_y=\sum_k w_k r_k(U)$ and $w_i=l_i(U)U_y$, gives
\begin{equation}\label{characteristic-source-exact}
L_iw_i=\sum_{j,k}\gamma_{ijk}(U)w_jw_k
+\eps\eta\sum_kH_{ik}(U)w_k,
\end{equation}
where
\begin{align*}
\gamma_{ijk}(U)
&=(\lambda_i-\lambda_j)\,D l_i(U)[r_j(U)]\,r_k(U)
-l_i(U)\bigl(D A(U)[r_j(U)]\,r_k(U)\bigr),\\
H_{ik}(U)
&=D l_i(U)[G(U)]\,r_k(U)+l_i(U)D G(U)r_k(U).
\end{align*}
A direct exterior differentiation yields
\begin{equation}\label{two-form-exact}
d\!\left(w_i(\dd y-\lambda_i(U)\dd s)\right)
=\left(\sum_{j,k}\Gamma_{ijk}(U)w_jw_k
+\eps\eta\sum_kH_{ik}(U)w_k\right)\dd s\wedge\dd y,
\end{equation}
with
\begin{equation}\label{Gamma-explicit}
\Gamma_{ijk}(U)
=\gamma_{ijk}(U)+\delta_{ij}\,D\lambda_i(U)[r_k(U)].
\end{equation}
In particular, the source coefficient is the same in \eqref{characteristic-source-exact} and \eqref{two-form-exact}.  The John--H\"ormander cancellation is now immediate:
\begin{equation}\label{Gamma-cancel}
\Gamma_{ijj}(U)=0
\qquad\text{for every }i,j.
\end{equation}
Indeed, if $i=j$, then
\[
\gamma_{iii}=-l_i(D A[r_i]r_i)=-D\lambda_i[r_i],
\]
so \eqref{Gamma-explicit} gives $\Gamma_{iii}=0$.  If $i\ne j$, differentiate $A r_j=\lambda_j r_j$ in the direction $r_j$ and use $l_i r_j=0$ to obtain
\[
l_i(D A[r_j]r_j)=(\lambda_j-\lambda_i)l_i(D r_j[r_j]).
\]
Differentiating $l_i r_j=0$ in the same direction gives
$D l_i[r_j]r_j=-l_i(D r_j[r_j])$, and hence $\gamma_{ijj}=0$.  This proves \eqref{Gamma-cancel}; see also \cite{John1974,Hormander1987,Barlin2023}.

Set
\begin{align}
 c_\lambda&:=\min_{i\ne p}\inf_{U\in\Omega}|\lambda_i(U)-\lambda_p(U)|>0,\notag\\
 \gamma&:=\max_i\sup_{U\in\Omega}\sum_{j,k}|\gamma_{ijk}(U)|,\qquad
 \Gamma:=\max_i\sup_{U\in\Omega}\sum_{j,k}|\Gamma_{ijk}(U)|,\notag\\
 G_*&:=\max_i\sup_{U\in\Omega}\sum_k|H_{ik}(U)|,\qquad
 r_*:=\sup_{U\in\Omega}\sum_k|r_k(U)|,\notag\\
 \Lambda_*&:=\sup_{U\in\Omega}\lambda_{\max}(U)-\inf_{U\in\Omega}\lambda_{\min}(U).\label{Lambda-star}
\end{align}
The scaled source in \eqref{scaled-system} is $\eps\eta G(U)$, hence every source contribution below carries the factor $\eps\eta$.

For later use, let
\[
A_1:=\int_\R|\alpha_0'(y)|\,\dd y.
\]
Since $c_\eps=O(\eps^2)$, after decreasing $\eps$ we have
\begin{equation}\label{J0-bound}
J(0)=\int_{-1/2}^{1/2}|h_\eps'(y)|\,\dd y
\le 2A_1\eps.
\end{equation}
Likewise Lemma \ref{lem:initial-modes} gives a constant $C_P>0$, independent of $\eps,\eta$, such that
\begin{equation}\label{zero-initial-precise}
\max_{i\ne p}\|w_i(0)\|_{L^\infty}
=\|w_0(0)\|_{L^\infty}
\le C_P\eps^2\eta.
\end{equation}

For $a_p(s)=X_p(s;-1/2)$ and $b_p(s)=X_p(s;1/2)$ define the $p$-strip
\[
\mathcal R_p(t)=\{(s,y):0\le s\le t,\ a_p(s)\le y\le b_p(s)\}.
\]
The four bootstrap quantities announced above are defined by
\begin{align}
J(t)&:=\sup_{0\le s\le t}\int_{a_p(s)}^{b_p(s)}|w_p(s,y)|\,\dd y,\label{Jdef}\\
S(t)&:=\sup_{0\le s\le t}[b_p(s)-a_p(s)],\notag\\
M(t)&:=\sup_{0\le s\le t,\ y\in\R}|U(s,y)-\bar U|,\notag\end{align}
and
\begin{align}
\widetilde V(t)&:=\max_{i\ne p}\sup_{0\le s\le t,y\in\R}|w_i(s,y)|,\notag\\
W_p^{\rm out}(t)&:=\sup_{\substack{0\le s\le t,\ y\in\R\\(s,y)\notin\mathcal R_p(t)}}|w_p(s,y)|,\notag\\
V(t)&:=\widetilde V(t)+W_p^{\rm out}(t).\label{Vdef}
\end{align}
Thus $J$ and $S$ describe the principal strip, $M$ keeps the coefficients in a fixed strictly hyperbolic neighborhood, and $V$ collects all waves that are lower order in the desired hierarchy.

\begin{lemma}[Absolute two-form estimate]\label{lem:absolute-twoform}
Fix a characteristic family $i$.  Let $\tau$ be a $C^1$ arc transverse to the $i$-characteristics such that the $i$-characteristic label is monotone along $\tau$ (equivalently, each $i$-characteristic meets $\tau$ at most once).  Let $\mathcal A_i(\tau)$ be the region bounded by $\tau$, the two $i$-characteristics through its endpoints, and the corresponding interval $I_0$ on $\{s=0\}$.  As long as the solution takes values in $\Omega$,
\begin{align}
\int_{\tau}\bigl|w_i(\dd y-\lambda_i\dd s)\bigr|
&\le \int_{I_0}|w_i(0,y)|\,\dd y \nonumber\\
&\quad+\iint_{\mathcal A_i(\tau)}\left|\sum_{j,k}\Gamma_{ijk}w_jw_k+\eps\eta\sum_kH_{ik}w_k\right|\,\dd s\,\dd y.
\label{absolute-twoform}
\end{align}
\end{lemma}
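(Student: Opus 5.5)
The plan is to apply Stokes' theorem to the one-form $\omega_i:=w_i(\dd y-\lambda_i(U)\dd s)$ on the region $\mathcal A_i(\tau)$, using the exact identity \eqref{two-form-exact}, and then to pass to absolute values. The boundary $\partial\mathcal A_i(\tau)$ has four pieces: the interval $I_0\subset\{s=0\}$, the arc $\tau$, and the two bounding $i$-characteristics $y=X_i(s)$. On an $i$-characteristic we have $\dd y=\lambda_i(U)\dd s$, so the pullback of $\omega_i$ vanishes there. Hence, with the orientations induced from $\partial\mathcal A_i(\tau)$,
\[
\int_\tau\omega_i=\int_{I_0}w_i(0,y)\,\dd y\pm\iint_{\mathcal A_i(\tau)}\Bigl(\sum_{j,k}\Gamma_{ijk}w_jw_k+\eps\eta\sum_kH_{ik}w_k\Bigr)\dd s\,\dd y .
\]
This is justified because the solution is $H^3$ in one dimension, so $U$ is $C^1$ and $w_i$ is $C^1$. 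In addition, $\mathcal A_i(\tau)$ is a piecewise-$C^1$ curvilinear region on which $U\in\Omega$, so all coefficients are bounded.

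The identity above does not give the absolute-value statement directly, because $\int_\tau|\omega_i|$ is larger than $|\int_\tau\omega_i|$ when $w_i$ changes sign along $\tau$. To handle this, I would parametrize $\tau$ by the characteristic label $\beta$, the foot point on $\{s=0\}$. This parametrization is monotone by hypothesis, and through the $C^1$ characteristic flow $\beta\mapsto$ point on $\tau$ is a $C^1$ parametrization. Then I would decompose $\tau$ into at most countably many subarcs $\tau_m$ on which $\omega_i|_\tau$ has a constant sign, up to the zero set. Each $\tau_m$ spans a subregion $\mathcal A_i(\tau_m)$ bounded by the two $i$-characteristics through its endpoints and a subinterval $I_m\subset I_0$. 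Monotonicity of the label along $\tau$ ensures that these subregions, and the intervals $I_m$, have pairwise disjoint interiors and are contained in $\mathcal A_i(\tau)$ and $I_0$.

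Applying Stokes' theorem on each $\mathcal A_i(\tau_m)$ gives
\[
\int_{\tau_m}|\omega_i|=\Bigl|\int_{\tau_m}\omega_i\Bigr|\le\int_{I_m}|w_i(0,y)|\,\dd y+\iint_{\mathcal A_i(\tau_m)}|\cdots|\,\dd s\,\dd y .
\]
Summing over $m$ and using disjointness yields \eqref{absolute-twoform}. The set where $\omega_i|_\tau=0$ contributes nothing, and the open set where it is nonzero is a countable union of open subarcs, so monotone convergence handles the sum.

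The main obstacle is the geometric bookkeeping. One must check that monotonicity of the label makes $\mathcal A_i(\tau_m)$ genuine nonoverlapping regions, that is, that the characteristics through the endpoints of $\tau_m$ do not cross $\tau$ again. One must also check that the Stokes orientation is consistent whether $\tau$ lies above the initial line as a spacelike arc or crosses in the other direction. Both points follow from the fact that $i$-characteristics never intersect one another, since they are integral curves of a $C^1$ field, together with the hypothesis that each characteristic meets $\tau$ at most once.
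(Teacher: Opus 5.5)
Your proposal is correct and follows essentially the same route as the paper. Both apply Stokes' theorem to $\omega_i=w_i(\mathrm{d}y-\lambda_i\,\mathrm{d}s)$ on $\mathcal A_i(\tau)$, with the lateral $i$-characteristic contributions vanishing, and then treat sign changes by decomposing $\{w_i\neq0\}\cap\tau$ into sign-constant subarcs; monotonicity of the characteristic label makes their characteristic regions and base intervals disjoint, so the pieces can be summed via monotone convergence.
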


\begin{proof}
Set $\omega_i=w_i(\dd y-\lambda_i\dd s)$.  Equation~\eqref{two-form-exact} gives $d\omega_i=R_i\,\dd s\wedge\dd y$, where $R_i$ is the integrand on the right-hand side of \eqref{absolute-twoform}.  If $w_i$ has one sign on $\tau$, multiply $\omega_i$ by that sign and apply Stokes' theorem.  The integrals over the two lateral $i$-characteristics vanish because $\dd y-\lambda_i\dd s=0$ there.  Taking absolute values therefore gives \eqref{absolute-twoform}.

If $w_i$ changes sign on $\tau$, decompose the relatively open set $\{w_i\neq0\}\cap\tau$ into its connected components.  Apply the preceding constant-sign argument first to finite unions of compactly contained components and then pass to an exhaustion.  The monotonicity of the characteristic label along $\tau$, together with order preservation of the characteristic flow, implies that disjoint components determine disjoint intervals on $\{s=0\}$ and disjoint characteristic regions.  Their bottom and area contributions consequently add without multiplicity.  Monotone convergence yields \eqref{absolute-twoform}.  This is the absolute-value form of the H\"ormander transversal-arc estimate; compare \cite[Lemma~2.1]{Barlin2023}.
\end{proof}

\begin{lemma}[Transversal crossing estimate]\label{lem:crossing}
Assume the solution takes values in $\Omega$, and let $i\ne p$.  Let $X_i(\cdot;z)$ be an $i$-characteristic and let $\omega_i\subset[0,t]$ be the (possibly empty) set of times for which $(s,X_i(s;z))$ belongs to the $p$-strip.  Then $\omega_i$ is an interval, the corresponding arc is transversal to the $p$-characteristics, and
\begin{align}
\int_0^t|w_p(s,X_i(s;z))|\,\dd s
&\le W_p^{\rm out}(t)t+c_\lambda^{-1}\Bigl[J(0)\nonumber\\
&\quad+t\{\Gamma V(t)+\eps\eta G_*\}
\{V(t)S(t)+J(t)\}\Bigr].
\label{crossing-lemma-est}
\end{align}
\end{lemma}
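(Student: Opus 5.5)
We split the time integral along the $i$-characteristic $\{(s,X_i(s;z))\}$ into the times outside the $p$-strip and the set $\omega_i$ of times inside it. Outside the strip the integrand is bounded pointwise by $W_p^{\rm out}(t)$ by definition, which gives the term $W_p^{\rm out}(t)\,t$.

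\emph{Structure of $\omega_i$.} The strip boundaries $a_p,b_p$ move with speed $\lambda_p$, and $\lambda_i-\lambda_p$ has a fixed sign on $\Omega$ with $|\lambda_i-\lambda_p|\ge c_\lambda$. Hence $X_i(s;z)-a_p(s)$ and $X_i(s;z)-b_p(s)$ are strictly monotone in $s$. Say $\lambda_i>\lambda_p$; then the $i$-curve enters through $a_p$ at most once and leaves through $b_p$ at most once. So $\omega_i$ is an interval $[s_1,s_2]$, and its length is at most $S(t)/c_\lambda$. The arc $\tau=\{(s,X_i(s;z)):s\in\omega_i\}$ has slope $\lambda_i\ne\lambda_p$, so it is transversal to the $p$-characteristics. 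Along it the $p$-characteristic label is strictly monotone, since each $p$-characteristic crosses $\tau$ at most once by the same monotonicity.

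\emph{Converting to a two-form integral.} On $\tau$, with $\dd y=\lambda_i\,\dd s$,
\[
|w_p(\dd y-\lambda_p\dd s)|=|w_p|\,|\lambda_i-\lambda_p|\,\dd s\ge c_\lambda|w_p|\,\dd s .
\]
Therefore $\int_{\omega_i}|w_p|\,\dd s\le c_\lambda^{-1}\int_\tau|w_p(\dd y-\lambda_p\dd s)|$. We apply Lemma~\ref{lem:absolute-twoform} with family $p$. Its hypothesis holds because $\tau$ lies in the strip, so the $p$-characteristics through its endpoints start in $[-1/2,1/2]$. The bottom interval $I_0$ is therefore contained in $[-1/2,1/2]$, giving the bound $J(0)$. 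The region $\mathcal A_p(\tau)$ lies in $\mathcal R_p(t)$, with area at most $t\,S(t)$ because each time slice has width at most $S(t)$.

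\emph{Bounding the area integrand.} By the cancellation \eqref{Gamma-cancel}, $\Gamma_{ppp}=0$, so every quadratic term $\Gamma_{pjk}w_jw_k$ contains at least one index $\ne p$. The factor with index $\ne p$ is bounded by $\widetilde V\le V$, and the other factor is $|w_j|\le |w_p|+V$. This gives
\[
\Bigl|\sum\Gamma_{pjk}w_jw_k\Bigr|\le \Gamma V(|w_p|+V).
\]
For the source term, $\eps\eta\bigl|\sum_k H_{pk}w_k\bigr|\le\eps\eta G_*(|w_p|+V)$. Integrating over the region slice by slice in $s$, we have $\int|w_p|\,\dd y\le J(t)$ and $\int V\,\dd y\le V S(t)$ on each slice. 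Summing over $s\in[0,t]$ yields
\[
t\{\Gamma V+\eps\eta G_*\}\{VS+J\}.
\]

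\emph{Main obstacle.} The delicate point is the geometry: verifying that $\mathcal A_p(\tau)$ really lies inside the strip and that the monotonicity hypothesis of Lemma~\ref{lem:absolute-twoform} holds. Both follow from the order preservation of the $p$-characteristic flow together with $\tau\subset\mathcal R_p(t)$. The rest is bookkeeping; the bound $|w_j|\le|w_p|+V$ is used with constants absorbed into $\Gamma,G_*$ as defined, since these are sums of absolute values of coefficients.
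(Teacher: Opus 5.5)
Your proposal is correct and follows the paper's proof essentially step for step: you split off the outside times via $W_p^{\rm out}$, show the $p$-label is monotone along the $i$-characteristic, convert $|w_p|\,\dd s$ into the one-form with the factor $c_\lambda^{-1}$, apply Lemma~\ref{lem:absolute-twoform} for the family $p$ with the bottom bounded by $J(0)$, and bound the area integrand slice by slice using $\Gamma_{ppp}=0$. The only difference is how you get the interval structure, and it is cosmetic: you use the gaps $X_i-a_p$ and $X_i-b_p$, whose derivatives compare $\lambda_i$ and $\lambda_p$ at different points, so your strict-monotonicity claim needs either disjoint speed ranges on the small set $\Omega$ or the remark that every zero is crossed in the same direction; the paper instead differentiates the inverse label $\Xi_p$ along the $i$-characteristic, so both speeds are evaluated at the same point.
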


\begin{proof}
As long as the solution is classical, the $p$-characteristic flow $\xi\mapsto X_p(s;\xi)$ is an increasing diffeomorphism onto its image.  Let $\Xi_p(s,y)$ denote its inverse label, so that $X_p(s;\Xi_p(s,y))=y$.  Differentiating this identity shows that along an $i$-characteristic,
\[
\frac{\dd}{\dd s}\Xi_p(s,X_i(s;z))
=(\partial_y\Xi_p)(s,X_i(s;z))\,[\lambda_i(U)-\lambda_p(U)].
\]
Here $\partial_y\Xi_p>0$, and the bracket has a fixed sign and absolute value at least $c_\lambda$ because the characteristic families remain ordered on $\Omega$.  Thus the $p$-label is strictly monotone along the $i$-characteristic.  The times for which that label belongs to $[-1/2,1/2]$ therefore form a single interval (possibly empty), and the corresponding arc meets the $p$-characteristic foliation transversally.  Outside that interval the integrand is bounded by $W_p^{\rm out}(t)$.

On the interior arc $\tau_i$, one has $\dd y=\lambda_i\dd s$, and therefore
\[
|w_p|\,\dd s
\le c_\lambda^{-1}|w_p(\dd y-\lambda_p\dd s)|.
\]
Apply Lemma~\ref{lem:absolute-twoform} with characteristic family $p$ and transversal arc $\tau_i$.  Because the $p$-label is strictly monotone along $\tau_i$, the characteristic region generated by every sign component lies inside the $p$-strip, and distinct components correspond to disjoint label intervals.  Hence the bottom boundary contributes at most $J(0)$.  By $\Gamma_{pjj}=0$, every nonzero quadratic term in the area integrand either contains one factor $w_p$ and one factor bounded by $V(t)$, or contains two factors bounded by $V(t)$.  The source term is bounded by $\eps\eta G_*(|w_p|+V(t))$.  The $p$-strip portion of each time slice has length at most $S(t)$, while the $L^1$ strength of the $p$-wave is bounded by $J(t)$.  Thus the area integral is bounded by
\[
t\{\Gamma V(t)+\eps\eta G_*\}\{V(t)S(t)+J(t)\}.
\]
Adding the outside contribution proves \eqref{crossing-lemma-est}.  This is the geometric estimate used in B\"arlin's proof of Lemma~3.2, with the scaled source $\eps\eta G$ in place of $\eps\kappa g$.
\end{proof}

\begin{lemma}[Constraint-compatible characteristic bootstrap]\label{lem:modified-Barlin}
Fix $T>0$. There exist constants $C_J,C_M,C_S,C_V>0$ and $\nu>0$, depending only on $T$, the system in $\Omega$, and the fixed profiles $\alpha_0,\psi$, such that the following holds.  If $0<\eps,\eta\le\nu$ and a classical solution of \eqref{scaled-system} with initial data \eqref{scaled-data} exists on
\[
0\le s\le T_1\le T\eps^{-1},
\]
then for every $0\le t\le T_1$,
\begin{equation}\label{bootstrap-result}
J(t)<C_J\eps,\qquad
M(t)<C_M\eps,\qquad
S(t)<C_S,\qquad
V(t)<C_V\eps^2.
\end{equation}
In particular, for $\nu$ sufficiently small the solution remains in $\Omega$ throughout this time interval.
\end{lemma}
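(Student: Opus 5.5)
The proof is a continuity (bootstrap) argument in the time variable $t\in[0,T_1]$. Fix the constants as follows:
\[
C_J=4A_1+1,\qquad C_S=2,\qquad C_M=2C_0,\qquad C_V=2C_*.
\]
Here $C_0$ is the constant in $\|U_0-\bar U\|_{L^\infty}\le C_0\eps$ from Lemma~\ref{lem:initial-modes}. The constant $C_*$ will be determined by the estimates below; it depends only on $T$, $c_\lambda,\Gamma,\gamma,G_*,r_*$ and the profiles, and not on $\eps,\eta$.

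At $t=0$ every inequality in \eqref{bootstrap-result} holds strictly, by \eqref{J0-bound}, \eqref{zero-initial-precise}, $S(0)=1$ and Lemma~\ref{lem:initial-modes}, provided $\nu$ is small (in particular $C_P\eps^2\eta<C_V\eps^2$). All four quantities are continuous and nondecreasing in $t$ for a classical solution. It therefore suffices to assume \eqref{bootstrap-result} with non-strict inequalities on $[0,t]$ and to recover the strict inequalities with room to spare, namely with the constants halved wherever possible. Under this hypothesis $M\le C_M\eps$, so for $\nu$ small the state stays in $\Omega$ and every coefficient bound in \eqref{Lambda-star} is available.

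\textbf{Step 1: transversal modes $V$.} This is the main step. For $i\ne p$, integrate \eqref{characteristic-source-exact} along an $i$-characteristic $X_i(\cdot;z)$. By \eqref{Gamma-cancel} one has $\gamma_{ipp}=\Gamma_{ipp}-\delta_{ip}(\cdots)=0$, since $i\ne p$. Hence every quadratic term contains at least one factor $w_j$ with $j\ne p$, or else a factor $w_p$ taken outside the strip. This gives
\[
|L_iw_i|\le \gamma V\bigl(|w_p|+V\bigr)+\eps\eta G_*\bigl(|w_p|+V\bigr).
\]
Gronwall's inequality along the characteristic then yields
\[
|w_i(s)|\le \Bigl(C_P\eps^2\eta+\gamma V^2 t+\eps\eta G_*Vt+\eps\eta G_*\!\int|w_p|\Bigr)\exp\Bigl(\gamma\!\int|w_p|\Bigr).
\]
The integral $\int_0^t|w_p(s,X_i)|\,\dd s$ is controlled by Lemma~\ref{lem:crossing}, with $t\le T\eps^{-1}$. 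We have $W_p^{\rm out}t\le C_V\eps T$. The bracket term is bounded by $c_\lambda^{-1}[2A_1\eps+T\eps^{-1}(\Gamma C_V\eps^2+\eps\eta G_*)(C_VC_S\eps^2+C_J\eps)]=O(\eps)$. Hence the exponential factor is $1+O(\eps)$. Every other term is then at most $C\eps^2(\eta+\eps)$; in particular $\gamma V^2t\le\gamma C_V^2T\eps^3$. For $\nu$ small this gives $\widetilde V\le \tfrac12 C_V\eps^2$.

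The escaped part $W_p^{\rm out}$ is handled the same way. For this term we integrate $L_pw_p$ along $p$-characteristics starting outside $[-1/2,1/2]$, where $w_p(0)=0$. Along such a characteristic all quadratic terms are $O(V\cdot V)$ plus the self-interaction $\gamma_{ppp}w_p^2$. The self-interaction is absorbed because $|w_p|\le V$ there, so the growth factor is $e^{O(\eps^2 t)}=1+O(\eps)$. The sources contribute $O(\eps^3)$.

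It is exactly here that the Poisson compatibility is used. The zero mode enters only through the initial value $C_P\eps^2\eta$ and through the $\eps\eta$-weighted sources. Without the neutral correction of Lemma~\ref{lem:neutral-correction}, $F_0$ would not decay to zero and $w_0$ could not be bounded this way, so the argument would fail.

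\textbf{Step 2: principal strength $J$ and width $S$.} For $J$, apply Lemma~\ref{lem:absolute-twoform} with $i=p$ on horizontal slices of the strip. The bottom boundary contributes $J(0)\le 2A_1\eps$. In the area term, the cancellation $\Gamma_{ppp}=0$ removes the $w_p^2$ contribution. What remains is bounded by
\[
t\,(\Gamma V+\eps\eta G_*)(VS+J)\le T\eps^{-1}\cdot O(\eps^2+\eps\eta)\cdot O(\eps)=O(\eps^2+\eps\eta).
\]
Thus $J\le 2A_1\eps+o(\eps)<C_J\eps$.

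For $S$, note that $\tfrac{\dd}{\dd s}(b_p-a_p)=\lambda_p(U(b_p))-\lambda_p(U(a_p))$. Write $U(b_p)-U(a_p)=\int_{a_p}^{b_p}\sum_k w_kr_k\,\dd y$. Its size is at most $r_*(J+VS)=O(\eps)$, so $|\dd S/\dd s|\le C\eps$. Over a time $T\eps^{-1}$ this gives $S\le1+CT$. We therefore take $C_S=2+2CT$ instead of $2$; this constant is fixed before $C_J$ and $C_V$ are chosen, so there is no circularity.

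\textbf{Step 3: amplitude $M$.} For fixed $s$, write
\[
U(s,y)-\bar U=\int_{-\infty}^y\sum_kw_kr_k\,\dd y'.
\]
The support of $U(s,\cdot)-\bar U$ has length at most $1+\Lambda_*s=O(\eps^{-1})$ by finite propagation; this uses $G(\bar U)=0$. Over this support the $p$-part contributes at most $r_*J$, and the $V$-part contributes $r_*V\cdot O(\eps^{-1})$. Both are $O(\eps)$, with leading constant $r_*(2A_1+1)+O(\eps)$ comparable to $C_0$. This closes the bound on $M$ once $C_M$ is taken to be that leading constant times two.

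The genuinely delicate point is the circular dependence among the constants. $C_S$ must be fixed first. $C_J$ and $C_M$ then depend on $C_S$. $C_V$ depends on all of them only through $\eps$-suppressed terms. Finally $\nu$ is chosen last, so that every product $\eps C_V^2$, $\eta C_V$, and so on, becomes small. This ordering is what makes the strict inequalities close, and the continuity argument then yields \eqref{bootstrap-result} on all of $[0,T_1]$.
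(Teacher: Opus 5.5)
Your proposal follows the paper's proof essentially step for step; only the order of the steps differs. It uses the same quantities $J,S,M,V$, the same identity $\frac{\dd}{\dd s}(b_p-a_p)=\lambda_p(U(s,b_p))-\lambda_p(U(s,a_p))$, the two-form lemma on horizontal slices with $\Gamma_{ppp}=0$, integration of $U_y=\sum_kw_kr_k$ over the finite-propagation support, and the $V$-estimate via $i$-characteristics, the crossing lemma and the outside $p$-characteristics. The individual estimates are right. However, the constant bookkeeping, which you yourself single out as the delicate point, does not close as written.

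\textbf{Step 3 and the order of constants.} Your bound is $M(t)\le r_*\bigl[J(t)+V(t)(1+\Lambda_*t)\bigr]$. Since $V\le C_V\eps^2$ is spread over a support of length up to $1+\Lambda_*T\eps^{-1}$, the transversal part contributes $r_*C_V\Lambda_*T\,\eps$ at leading order.
\begin{itemize}
\item The leading constant is therefore about $r_*(2A_1+C_V\Lambda_*T)$, not $r_*(2A_1+1)$, and it is unrelated to $C_0$.
\item With $C_M=2C_0$, or with $C_M$ fixed before $C_V$ as in your last paragraph, the strict bound $M<C_M\eps$ is not recovered.
\item Similarly, the rate in $|\dd S/\dd s|\le C\eps$ is of order $\sup|\nabla\lambda_p|\,r_*C_J$. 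So $C_S$ depends on $C_J$ and cannot be fixed first.
\end{itemize}

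The repair is the paper's order:
\begin{enumerate}
\item Fix $C_J$ first. Your $4A_1+1$ is fine, since the $C_S$- and $C_V$-dependent terms in the $J$ estimate are $o(\eps)$.
\item Then take $C_S\sim 1+C_JT$.
\item Then fix $C_V$, which does not depend on $C_M$. The $V$-independent forcing is $O(\eps^2\eta)$ and the feedback coefficient is $O(\eps+\eta)$, so for example the paper's $C_V=4[1+C_P+G_*c_\lambda^{-1}C_J(1+G_*T)]$ works.
\item Then set $C_M=2r_*[C_J+C_V(1+\Lambda_*T)]$.
\item Finally choose $\nu$ small enough that all absorptions hold and $C_M\nu$ is below the distance from $\bar U$ to $\partial\Omega$.
\end{enumerate}

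\textbf{Three smaller points.}
\begin{itemize}
\item In Step 1 the cross terms $\gamma_{ipk}w_pw_k$ with $k\ne i,p$ are not linear in $w_i$, so your Gronwall display does not follow as written. Integrate directly instead. This adds $\gamma V\int|w_p|\,\dd s=O(\eps^3)$ with a $C_V$-dependent constant, which is harmless.
\item For the escaped $p$-wave, the source contributes $G_*C_VT\eps^2\eta$, not $O(\eps^3)$. It is absorbed by taking $\eta$ small.
\item Neutrality is not what makes $w_0(0)=\eps\eta(1-v_0)=O(\eps^2\eta)$; that holds for any $v_0$. Neutrality is what makes $F_0$ compactly supported. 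This is needed for $H^3$ data and for the support length $1+\Lambda_*t$ used in Step 3.
\end{itemize}
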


\begin{proof}
The proof is an open--closed argument.  It has four coupled estimates, arranged in the order suggested by the geometry: first the strip width $S$, then the $L^1$ strength $J$ of the principal wave, then the amplitude $M$, and finally the transversal quantity $V$.  The first three estimates are the familiar simple-wave bounds.  The fourth is the only step modified by Poisson compatibility, through the initial contribution $w_0(0)=O(\eps^2\eta)$.

We follow the quantitative scheme of Lemma~3.2 in \cite{Barlin2023}, recording that additional term explicitly.  Because the coefficient bounds above are taken on $\Omega$, first introduce the exit time
\[
\tau_{\Omega}:=\sup\{\,s\in[0,T_1]: U(\sigma,y)\in\Omega
\text{ for every }0\le\sigma\le s,\ y\in\R\,\}.
\]
The initial data lie in the interior of $\Omega$ for sufficiently small $\eps$.  All estimates below are first carried out on $[0,\tau_{\Omega})$.  We choose the bootstrap amplitude constant and then $\nu$ so that the improved bound $M<C_M\eps$ keeps the range a fixed positive distance from $\partial\Omega$; the open--closed argument will then imply $\tau_{\Omega}=T_1$.

Let $c_*$ be a fixed constant dominating $1$ and the coefficients appearing in the derivative of the $p$-strip width, and choose the bootstrap constants in the order
\begin{align}
C_J&:=4A_1,\notag\\
C_S&:=2c_*(1+C_JT),\label{CS-choice}\\
C_V&:=4\left[1+C_P+G_*c_\lambda^{-1}C_J(1+G_*T)\right],\label{CV-choice}\\
C_M&:=2r_*\left[C_J+C_V\{1+T\Lambda_*\}\right].\label{CM-choice}
\end{align}
All these constants are independent of $\eps,\eta$.  Shrinking $\Omega$ only at the start of the argument, fix $d_{\Omega}>0$ so that the $d_{\Omega}$-neighborhood of $\bar U$ is contained in the interior of $\Omega$, and reduce $\nu$ so that $C_M\nu<d_{\Omega}/2$.  By \eqref{J0-bound},
\[
J(0)\le \frac12 C_J\eps.
\]
Moreover $S(0)=1$, while \eqref{zero-initial-precise} gives
\begin{equation*}
V(0)\le C_P\eps^2\eta.
\end{equation*}
For sufficiently small $\nu$ these inequalities are strict relative to the bounds in \eqref{bootstrap-result}.

Assume that the four bounds \eqref{bootstrap-result} hold on $[0,t)$ for some $t<\tau_{\Omega}$.  We show that all four improve at time $t$.

\smallskip
\noindent\emph{Step 1: control of the strip width.}
The first estimate prevents the two boundary $p$-characteristics from separating on the $s=O(\eps^{-1})$ time scale.  Since $a_p$ and $b_p$ are $p$-characteristics,
\[
\frac{\dd}{\dd s}(b_p-a_p)
=\lambda_p(U(s,b_p))-\lambda_p(U(s,a_p)).
\]
Using $\partial_y\lambda_p=\sum_k (\nabla\lambda_p\cdot r_k)w_k$ and setting $c_{ppk}=\nabla\lambda_p\cdot r_k$ gives
\[
\frac{\dd}{\dd s}(b_p-a_p)
=\int_{a_p(s)}^{b_p(s)}\sum_k c_{ppk}(U)w_k\,\dd y,
\]
whence
\begin{equation*}
S(t)\le 1+c_*\{\widetilde V(t)S(t)t+J(t)t\}.
\end{equation*}
Using $t\le T\eps^{-1}$ and the bootstrap assumptions,
\begin{equation*}
S(t)\le 1+c_*C_JT+c_*TC_V\eps S(t).
\end{equation*}
After decreasing $\nu$ so that $c_*TC_V\nu\le1/3$, one obtains
\[
S(t)\le \frac32(1+c_*C_JT)<C_S
\]
by the choice \eqref{CS-choice} (and, if necessary, replacing $c_*$ once at the start by a larger fixed coefficient bound).

\smallskip
\noindent\emph{Step 2: control of the principal $L^1$ strength.}
The quantity $J$ is chosen because the self-interaction of the $p$-wave cancels from the H\"ormander two-form identity.  Consequently its growth is driven only by transversal waves and the scaled source.  The differentiated system has the characteristic form \eqref{characteristic-source-exact}; for later reference we set
\begin{equation*}
L_i:=\partial_s+\lambda_i(U)\partial_y.
\end{equation*}
Apply Lemma~\ref{lem:absolute-twoform} with $i=p$ to the horizontal top segment $\tau_s=\{s\}\times[a_p(s),b_p(s)]$.  Its lateral boundaries are $p$-characteristics, so their one-form integrals vanish.  By \eqref{Gamma-cancel}, the quadratic area integrand contains no square $w_j^2$.  Inside the strip, every surviving term is therefore bounded either by $\Gamma\widetilde V(t)|w_p|$ or by $\Gamma\widetilde V(t)^2$, while the source term is bounded by $\eps\eta G_*(|w_p|+\widetilde V(t))$.  Integrating over the strip and taking the supremum over $0\le s\le t$ yields
\begin{equation*}
J(t)\le J(0)
+\Gamma\{\widetilde V(t)^2S(t)t+\widetilde V(t)J(t)t\}
+\eps\eta G_*\{\widetilde V(t)S(t)t+J(t)t\}.
\end{equation*}
Substitution of the bootstrap bounds and $t\le T\eps^{-1}$ yields
\begin{align}
J(t)
&\le \frac12C_J\eps
+\Gamma C_V^2C_ST\eps^3
+\eta G_*C_VC_ST\eps^2\nonumber\\
&\quad+\{\Gamma C_VT\eps+\eta G_*T\}J(t).
\label{J-close-exact}
\end{align}
Choose $\nu$ so small that
\begin{equation*}
\Gamma C_VT\nu+G_*T\nu\le\frac16,
\qquad
\Gamma C_V^2C_ST\nu^2+G_*C_VC_ST\nu^2\le\frac18C_J.
\end{equation*}
Then \eqref{J-close-exact} implies
\[
J(t)\le\frac34C_J\eps<C_J\eps.
\]

\smallskip
\noindent\emph{Step 3: control of the state amplitude.}
Once $J=O(\eps)$ and $V=O(\eps^2)$ are available, the amplitude follows from spatial integration of $U_y=\sum_iw_ir_i$.  This step keeps the solution inside the fixed hyperbolic neighborhood $\Omega$.  Because $G(\bar U)=0$, finite propagation for the shifted variable $U-\bar U$ gives a spatial support interval of length at most
\[
1+\Lambda_*t,
\]
with $\Lambda_*$ defined in \eqref{Lambda-star}.
Since $U_y=\sum_iw_ir_i(U)$ and the perturbation vanishes at $y=-\infty$,
\begin{equation*}
M(t)\le r_*\left[J(t)+V(t)\{1+\Lambda_*t\}\right].
\end{equation*}
Hence, using the improved $J$ estimate,
\[
M(t)\le r_*\left[C_J+C_V\eps+C_VT\Lambda_*\right]\eps.
\]
With the choice \eqref{CM-choice}, $M(t)<\frac12C_M\eps$.  Shrinking $\nu$ once more ensures $C_M\nu$ is smaller than the radius of $\Omega$.

\smallskip
\noindent\emph{Step 4: closure of the transversal hierarchy.}
This is the decisive estimate.  We must show that the non-$p$ modes and the escaped part of the $p$-wave stay at order $O(\eps^2)$, despite the nonzero zero-speed Poisson component present initially.  First consider a $p$-characteristic that stays outside $\mathcal R_p(t)$.  Since $w_p(0)=0$ outside $(-1/2,1/2)$, integration of \eqref{characteristic-source-exact} gives
\begin{equation}\label{Wout-rigorous}
W_p^{\rm out}(t)
\le t\{\gamma V(t)+\eps\eta G_*\}V(t)
\le T\{\gamma C_V\eps+G_*\eta\}V(t).
\end{equation}
Indeed, along such a characteristic all characteristic derivatives are bounded by $V(t)$, while $t\le T\eps^{-1}$ and the bootstrap gives $V(t)\le C_V\eps^2$.

Now let $i\ne p$ and integrate \eqref{characteristic-source-exact} along the $i$-characteristic through an arbitrary point.  The relation $\gamma_{ijj}=-\delta_{ij}\langle D\lambda_i,r_i\rangle$ implies that every quadratic term contains at least one factor bounded by $V(t)$ once $i\ne p$.  The only nonzero initial datum among the $i\ne p$ modes is the Poisson zero mode, bounded by \eqref{zero-initial-precise}. Consequently,
\begin{equation}\label{wi-pre-cross}
|w_i(s,y)|
\le C_P\eps^2\eta
+\{\gamma V(t)+\eps\eta G_*\}
\left[tV(t)+\int_0^t|w_p(\sigma,X_i(\sigma;z_i))|\,\dd\sigma\right].
\end{equation}
Lemma~\ref{lem:crossing} gives
\begin{align}
\int_0^t|w_p(\sigma,X_i(\sigma;z_i))|\,\dd\sigma
&\le V(t)t+c_\lambda^{-1}\Bigl[J(0)\nonumber\\
&\quad+t\{\Gamma V(t)+\eps\eta G_*\}
\{V(t)S(t)+J(t)\}\Bigr].
\label{crossing-rigorous}
\end{align}
Before inserting the bootstrap sizes, it is useful to keep the complete algebraic structure visible.  Adding \eqref{Wout-rigorous} and \eqref{wi-pre-cross}, then using \eqref{crossing-rigorous}, gives the master inequality
\begin{align}
V(t)\le{}&C_P\eps^2\eta
+3t\{\gamma V(t)+\eps\eta G_*\}V(t)
+c_\lambda^{-1}\{\gamma V(t)+\eps\eta G_*\}J(0)\nonumber\\
&+c_\lambda^{-1}t\{\gamma V(t)+\eps\eta G_*\}
\{\Gamma V(t)+\eps\eta G_*\}
\{V(t)S(t)+J(t)\}.
\label{V-master}
\end{align}
Every contribution to the $V$-estimate appears in \eqref{V-master}: the first term is the initial Poisson mode, the factor $3t$ consists of one escaped-$p$ contribution and the two $tV$ contributions in \eqref{wi-pre-cross}, the $J(0)$ term is the bottom boundary of the crossing region, and the last line is its two-form area contribution.

Using the bootstrap bounds in \eqref{V-master} gives
\begin{equation}\label{V-final-exact}
V(t)\le C_P\eps^2\eta+Q(\eps,\eta)V(t)
+\eta G_*c_\lambda^{-1}C_J(1+\eta G_*T)\eps^2,
\end{equation}
where one may take
\begin{align}
Q(\eps,\eta)
={}&3T(\gamma C_V\eps+G_*\eta)\nonumber\\
&+\gamma\eps c_\lambda^{-1}
\left[C_J+T(\Gamma C_V\eps+G_*\eta)(C_VC_S\eps+C_J)\right]\nonumber\\
&+G_*\eta c_\lambda^{-1}T\eps
\left[\Gamma(C_VC_S\eps+C_J)+G_*C_S\eta\right].
\label{Q-explicit}
\end{align}
The small-parameter bookkeeping in \eqref{Q-explicit} is worth making explicit.  The first line comes from the escaped $p$-wave and the two copies of the $tV$ term in \eqref{wi-pre-cross}; the second line contains the contribution of the initial principal strength $J(0)$ and the part of the crossing-area term carrying a transversal factor $V$; the third line comes from the source-driven part of that area term.  Thus every coefficient multiplying $V(t)$ contains at least one factor $\eps$ or $\eta$.  More quantitatively, for $0<\eps,\eta\le1$ one may take
\begin{align*}
C_Q:={}&3T(\gamma C_V+G_*)
+\gamma c_\lambda^{-1}\Bigl[C_J
+T(\Gamma C_V+G_*)(C_VC_S+C_J)\Bigr]\nonumber\\
&+G_*c_\lambda^{-1}T
\Bigl[\Gamma(C_VC_S+C_J)+G_*C_S\Bigr],
\end{align*}
so that
\begin{equation*}
Q(\eps,\eta)\le C_Q(\eps+\eta).
\end{equation*}
The terms in \eqref{V-final-exact} which do not multiply $V$ satisfy, again for $0<\eta\le1$,
\begin{equation}\label{V-additive-small}
C_P\eps^2\eta
+\eta G_*c_\lambda^{-1}C_J(1+\eta G_*T)\eps^2
\le A_0\eps^2,
\end{equation}
where
\[
A_0:=C_P+G_*c_\lambda^{-1}C_J(1+G_*T)
<\frac14C_V
\]
by \eqref{CV-choice}.  This displays the two relevant scales directly: the feedback coefficient is $O(\eps+\eta)$, whereas all forcing terms independent of $V$ are $O(\eps^2\eta)$ and hence lie below the $O(\eps^2)$ bootstrap scale.

Reduce $\nu$ so that $C_Q(\eps+\eta)\le1/4$.  Equations \eqref{V-final-exact} and \eqref{V-additive-small} then give
\[
V(t)\le\frac14V(t)+\frac14C_V\eps^2,
\]
therefore
\begin{equation*}
V(t)\le\frac13C_V\eps^2<C_V\eps^2.
\end{equation*}
This is the only place where the proof differs from the exact-simple-wave argument: the additional term $C_P\eps^2\eta$ in \eqref{V-final-exact} comes from the Poisson compatibility condition.  It lies strictly below the $O(\eps^2)$ bootstrap scale.

All four estimates are strict improvements.  The quantities $J,M,S,V$ are continuous and nondecreasing in the terminal time, so the usual open--closed argument gives the bounds up to $\tau_{\Omega}$.  The estimate $M<C_M\eps<d_{\Omega}/2$ prevents the range from reaching $\partial\Omega$; hence $\tau_{\Omega}=T_1$.  This proves \eqref{bootstrap-result} on the full interval.  In particular, the Poisson contribution is lower order in the precise sense that
\[
w_0(0)=O(\eps^2\eta),\qquad |\lambda_p-\lambda_0|\ge c_\lambda,\qquad V(t)=O(\eps^2).
\]
Thus the constraint-generated mode is both one order smaller than the principal compression and uniformly transversal to it.
\end{proof}

The bootstrap has now established the only hierarchy needed for singularity formation: the solution amplitude remains $O(\eps)$ and every mode except the principal one remains $O(\eps^2)$.  Hence the quadratic self-interaction of the genuinely nonlinear $p$-family dominates all coupling and source terms along a suitable $p$-characteristic.  We turn to this final Riccati step.

\subsection{Riccati growth and completion of the proof}\label{sec:riccati}

The previous subsection yields the decisive scale separation
\[
M=O(\eps),\qquad V=O(\eps^2),
\]
throughout a scaled time interval of length $O(\eps^{-1})$.  Hence the selected genuinely nonlinear family sees, to leading order, only its quadratic self-interaction.  The rest of the argument has four steps: persistence of the initial compression, a Riccati lower bound, comparison with an explicit scalar equation, and finally the continuation argument in physical variables.

Before starting the comparison, we fix the time window once and for all.  The family $p$ and the profile $\alpha_0$ have already been fixed, hence so have $\gamma_*>0$ and $A_*>0$.  Set
\begin{equation*}
K_0:=\frac{16}{3\gamma_*A_*},
\end{equation*}
and choose a constant
\begin{equation}\label{T-choice-riccati}
T>K_0.
\end{equation}
From this point on, Lemma~\ref{lem:modified-Barlin} is invoked with this fixed $T$; consequently its constants $C_J,C_M,C_S,C_V$ and its smallness threshold $\nu$ are fixed before $\eps$ and $\eta$ are chosen.

Let $y_*$ satisfy
\[
\alpha_0'(y_*)=A_*>0
\]
and let $X_p(s;y_*)$ be the $p$-characteristic issued from $y_*$.  Set
\begin{equation*}
\mathcal W(s):=w_p(s,X_p(s;y_*)).
\end{equation*}
By \eqref{wp0} and $c_\eps=O(\eps^2)$,
\begin{equation}\label{W0}
\mathcal W(0)=A_*\eps+O(\eps^2),
\qquad
\mathcal W(0)\ge \frac12A_*\eps
\end{equation}
for all sufficiently small $\eps$.  Moreover, the orientation \eqref{orientation} and the estimate $M=O(\eps)$ from Lemma~\ref{lem:modified-Barlin} imply
\begin{equation*}
\gamma_{ppp}(U)\ge\frac12\gamma_*
\end{equation*}
throughout the bootstrap interval.  Therefore \eqref{characteristic-source-exact} gives
\begin{equation}\label{W-pre-Riccati}
\mathcal W'
\ge\frac12\gamma_*\mathcal W^2
-\gamma\{V^2+V|\mathcal W|\}
-\eps\eta G_*\{V+|\mathcal W|\}.
\end{equation}

\begin{lemma}[Persistence of compression and Riccati lower bound]\label{lem:compression-persistence}
There exist $\eps_0,\eta_0>0$ such that, for $0<\eps\le\eps_0$ and $0<\eta\le\eta_0$, every classical solution covered by Lemma~\ref{lem:modified-Barlin} satisfies
\begin{equation}\label{W-persistent}
\mathcal W(s)\ge \mathcal W(0)>0
\end{equation}
and
\begin{equation}\label{Riccati-rigorous}
\mathcal W'
\ge a\mathcal W^2-b\eps\eta\mathcal W,
\qquad
a:=\frac38\gamma_*,\quad b:=2G_*.
\end{equation}
\end{lemma}

\begin{proof}
Assume first that $\mathcal W\ge\mathcal W(0)/2$ on an interval beginning at $s=0$.  From \eqref{W0} and Lemma~\ref{lem:modified-Barlin},
\[
\mathcal W\ge \frac14A_*\eps,
\qquad
V\le C_V\eps^2
\le c_W\eps\mathcal W,
\qquad
c_W:=\frac{4C_V}{A_*}.
\]
Substitution into \eqref{W-pre-Riccati} yields
\[
\mathcal W'
\ge
\Bigl[\frac12\gamma_*-\gamma c_W\eps(1+c_W\eps)\Bigr]\mathcal W^2
-\eps\eta G_*(1+c_W\eps)\mathcal W.
\]
Reduce $\eps_0$ so that
\[
\frac12\gamma_*-\gamma c_W\eps_0(1+c_W\eps_0)
\ge \frac38\gamma_*,
\qquad
1+c_W\eps_0\le2.
\]
Then \eqref{Riccati-rigorous} follows.  Since $\mathcal W\ge A_*\eps/4$, the right-hand side of \eqref{Riccati-rigorous} is strictly positive once
\begin{equation}\label{eta-small-rigorous}
0<\eta\le\eta_0<\frac{3\gamma_*A_*}{64G_*},
\end{equation}
with no restriction from \eqref{eta-small-rigorous} when $G_*=0$.  Hence $\mathcal W$ cannot decrease to $\mathcal W(0)/2$.  A first-exit argument proves \eqref{W-persistent}, and the same estimate then holds on the whole classical interval under consideration.
\end{proof}

To obtain finite-time growth, compare with the scalar Riccati equation.  Let $Y$ solve
\begin{equation}\label{comparison-ode}
Y'=aY^2-b\eps\eta Y,
\qquad
Y(0)=\frac12A_*\eps.
\end{equation}
By \eqref{W0}, Lemma~\ref{lem:compression-persistence}, and the scalar comparison principle,
\begin{equation}\label{comparison-WY}
\mathcal W(s)\ge Y(s)
\end{equation}
as long as the classical solution exists.  If $G_*>0$, the smallness condition imposed in Lemma~\ref{lem:compression-persistence} is already stronger than
\begin{equation*}
\eta<\frac{aA_*}{2b}=\frac{3\gamma_*A_*}{32G_*},
\end{equation*}
so the comparison equation is supercritical from the initial time.  Its blow-up time is
\begin{equation*}
s_b=K(\eta)\eps^{-1},\qquad
K(\eta):=
\frac{1}{b\eta}
\log\!\left(\frac{aA_*/2}{aA_*/2-b\eta}\right).
\end{equation*}
Moreover,
\begin{equation}\label{Keta-limit}
K(\eta)=K_0+O(\eta),\qquad K(\eta)\longrightarrow K_0
\quad(\eta\downarrow0).
\end{equation}
When $G_*=0$, equation \eqref{comparison-ode} reduces to $Y'=aY^2$, and we set $K(\eta)\equiv K_0$, so again $s_b=K_0\eps^{-1}$.

Because $T>K_0$ by \eqref{T-choice-riccati}, continuity in \eqref{Keta-limit} gives a number $\eta_T>0$ such that
\begin{equation}\label{etaT-choice}
0<\eta\le\eta_T
\quad\Longrightarrow\quad
K(\eta)<T.
\end{equation}
Hence, after imposing this final smallness condition on $\eta$, one has
\begin{equation*}
s_b<T\eps^{-1},
\end{equation*}
so the whole Riccati comparison lies inside the bootstrap window of Lemma~\ref{lem:modified-Barlin}.

Returning to physical variables, $t=\ell s$ with $\ell=\eps\eta$, the comparison scale $s_b=O(\eps^{-1})$ corresponds to a physical time of order $\eta$.  In particular, the lifespan estimate proved below will have the form
\begin{equation*}
T_{\max}\le C\eta.
\end{equation*}
More precisely, if $G_*>0$,
\begin{equation*}
T_{\max}
\le
\frac1b\log\!\left(\frac{aA_*/2}{aA_*/2-b\eta}\right)
=K_0\eta+O(\eta^2),
\end{equation*}
while for $G_*=0$ one has exactly $T_{\max}\le K_0\eta$ at the comparison scale.  Thus the state amplitude can be made arbitrarily small through $\eps$, whereas the physical initial compression is of size $\eta^{-1}$ and can be made arbitrarily strong by narrowing the pulse.

\begin{proof}[Proof of Theorem \ref{thm:gradient}]
By condition \eqref{GN-assump}, at least one nonzero characteristic family is genuinely nonlinear.  Fix such a family and orient it by \eqref{orientation}.  The profile $\alpha_0$ has already been fixed in \eqref{profiles}, and the constant $T$ has been fixed by \eqref{T-choice-riccati}.  Invoke Lemma~\ref{lem:modified-Barlin} with this $T$, and let $\nu=\nu(T)$ be its smallness threshold.  Next let $\eps_0,\eta_0$ be the constants from Lemma~\ref{lem:compression-persistence}, and let $\eta_T$ be given by \eqref{etaT-choice}.

Now construct the neutral Poisson-compatible profile from Lemmas~\ref{lem:neutral-correction} and \ref{lem:initial-modes}.  Since its state amplitude is $O(\eps)$ uniformly for $0<\eta\le1$, first choose
\[
0<\eps\le\min\{\nu,\eps_0\}
\]
small enough that the physical initial perturbation has $L^\infty$ norm below the prescribed $\delta_0$.  After $\eps$ has been fixed, choose
\begin{equation*}
0<\eta\le\min\{\nu,\eta_0,\eta_T,1\}.
\end{equation*}
Thus every bootstrap, persistence, and comparison smallness condition has been imposed in a noncircular order before the evolution is considered.  Scale the data back to physical variables with $\ell=\eps\eta$.

Apply Proposition~\ref{prop:LWP} with $m=3$. Let $T_{\max}$ be the maximal physical $H^3$ lifespan and set $S_{\max}=T_{\max}/\ell$.  If $S_{\max}>s_b$, then the solution is classical throughout $[0,s_b]$ in scaled variables.  Lemma~\ref{lem:modified-Barlin} keeps the state in an $O(\eps)$ neighborhood of equilibrium there, while \eqref{comparison-WY} forces $w_p$ to diverge by $s=s_b$, a contradiction.  Therefore
\begin{equation*}
T_{\max}\le \ell s_b\le C\eta.
\end{equation*}

It remains to identify the mechanism of breakdown.  For every $t<T_{\max}$, Lemma~\ref{lem:modified-Barlin} applies on the corresponding compact scaled interval, so the state stays in a fixed compact set $K\Subset\OO$ inside the physical strictly hyperbolic region.  In particular, $v$ and $\theta$ stay uniformly positive, $\cC=C_v-\tau q^2/(\kappa_0\theta^2)$ stays away from zero, and strict hyperbolicity persists.  The continuation criterion in Proposition~\ref{prop:LWP} therefore yields
\[
\int_0^{T_{\max}}\|U_x(t)\|_{L^\infty}\,\dd t=+\infty.
\]
On the other hand, the propagated Poisson constraint and the $M=O(\eps)$ estimate give
\[
F_x=1-v=O(\eps)
\]
uniformly on $[0,T_{\max})$.  Hence the loss of regularity must occur in the fluid--thermal derivatives:
\[
\limsup_{t\uparrow T_{\max}}
\bigl(\|v_x(t)\|_\infty+\|u_x(t)\|_\infty+
\|\theta_x(t)\|_\infty+\|q_x(t)\|_\infty\bigr)=+\infty.
\]
This proves \eqref{grad-blow-main} and the asserted bound on $F_x$, whether the maximal solution survives until the comparison time or breaks down earlier.

If the maximal scaled lifespan reaches the comparison threshold, $S_{\max}=s_b$, then the selected characteristic derivative itself diverges as $s\uparrow s_b$.  Because the nonzero left eigenvectors have vanishing $F$ component by the block structure \eqref{A-matrix}, this divergence is purely fluid--thermal.  Moreover,
\begin{equation*}
w_p^{\rm ph}=\ell^{-1}w_p^{\rm sc},
\qquad \ell=\eps\eta,
\end{equation*}
so blow-up of the scaled characteristic derivative is exactly blow-up of the corresponding physical derivative.

Finally, the perturbation has the equilibrium far field.  Proposition~\ref{prop:equiv} therefore reconstructs a potential $\phi$ and identifies the localized solution with a solution of the original CEP system.  The proof is complete.
\end{proof}

\section{Concluding remarks}\label{sec:discussion}

We have identified two distinct finite-time breakdown mechanisms for the one-dimensional Euler--Poisson--Cattaneo system.  The affine family of Theorem~\ref{thm:affine} reaches the boundary of the physical state space through
\[
 v(t)\downarrow0,
 \qquad
 \rho(t)=v(t)^{-1}\uparrow\infty,
\]
while the velocity gradient remains bounded.  By contrast, Theorem~\ref{thm:gradient} produces a small-amplitude solution which stays in a compact strictly hyperbolic neighborhood of equilibrium, with
\[
 v,\theta\ge c_0>0,
 \qquad
 C_v-\frac{\tau q^2}{\kappa_0\theta^2}\ge c_0>0,
\]
but loses $C^1$ regularity in finite time.  Thus density collapse and gradient catastrophe are genuinely different singularity mechanisms in the same model.

The main structural point in the second mechanism is the Poisson constraint.  After introducing
\[
 F=\frac{\phi_x}{v},
\]
the field equation becomes the propagated differential constraint $F_x=1-v$, while, under $a'(1)<0$, the evolution system becomes a local strictly hyperbolic balance law near equilibrium.  The constraint prevents the direct use of an unconstrained pure simple wave.  The construction in Subsection~\ref{sec:compression} restores neutrality and Poisson compatibility exactly, yet introduces only an $O(\eps^2\eta)$ zero-speed mode.  The characteristic estimates in Subsection~\ref{sec:barlin} show that this mode remains below the $O(\eps^2)$ transversal-wave scale, so the genuinely nonlinear Riccati mechanism survives the field coupling.  In this sense, the Poisson interaction changes the admissible data manifold without destroying the short-scale steepening mechanism.

The two results leave several natural questions open.
\begin{enumerate}[label=(\roman*)]
\item \textbf{Critical thresholds and mechanism selection.}  It would be interesting to identify classes of initial data for which the Poisson field separates global smooth evolution from gradient catastrophe, and to understand which data lead instead to the state-space collapse $v\downarrow0$.
\item \textbf{The relaxation limit.}  As $\tau\to0^+$, the Cattaneo law approaches Fourier heat conduction, whereas the characteristic parameter $d=\kappa_0/(\tau C_v)$ becomes singular.  Determining whether the hyperbolic gradient catastrophe persists, disappears, or changes scale in this limit would clarify the transition from hyperbolic to parabolic heat transport.
\item \textbf{Regularization and singular profiles.}  The present argument detects blow-up of a nonzero characteristic derivative and hence of at least one of $v_x,u_x,\theta_x,q_x$.  A sharper description of the asymptotic derivative ratios along the singular characteristic, and of their modification after adding momentum viscosity, would give a more precise picture of the singularity profile.
\end{enumerate}

\section*{Acknowledgments}
Qingsong Zhao was supported by the National Natural Science Foundation of China under Grant Number 12401281.

\end{document}